\newtheorem{theorem}{Theorem}
\newtheorem{proposition}{Proposition}
\newtheorem{lemma}{Lemma}
\newtheorem{assumption}{Assumption}
\newtheorem*{aprime}{Assumption (A')}
\newtheorem*{hhom}{Assumption ($\text{H}_{\text{hom}}$)}
\newtheorem*{hinfty}{Assumption ($\text{H}_{\infty}$)}
\newtheorem*{hinftyprime}{Assumption ($\text{H'}_{\infty}$)}
\theoremstyle{definition}
\newtheorem{definition}{Definition}
\theoremstyle{remark}
\newtheorem{remark}{Remark}
\def\E{\mathbb{E}}
\def\P{\mathbb{P}}
\def\R{\mathbb{R}}
\def\Q{\mathbb{Q}}
\def\N{\mathbb{N}}
\def\1{\mathbbm{1}}
\def\d{\partial}
\def\Z{\mathbb{Z}}
\def\cF{{\cal F}}
\def\cB{{\cal B}}
\def\cM{{\cal M}}
\def\cC{{\cal C}}
\begin{document}

\title{$Q$-processes and asymptotic properties of Markov processes conditioned not to hit moving boundaries}
\author{William Oçafrain$^{1}$}
\date{\today}

\footnotetext[1]{Institut de Math\'{e}matiques de Toulouse, UMR 5219; Universit\'{e} de Toulouse, CNRS, UPS IMT, F-31062
Toulouse Cedex 9, France; \\
  E-mail: william.ocafrain@math.univ-toulouse.fr}

\maketitle

\begin{abstract}
We investigate some asymptotic properties of general Markov processes conditioned not to be absorbed by the moving boundaries. We first give general criteria involving an exponential convergence towards the $Q$-process, that is the law of the considered Markov process conditioned never to reach the moving boundaries. This exponential convergence allows us to state the existence and uniqueness of the quasi-ergodic distribution considering either boundaries moving periodically or stabilizing boundaries. We also state the existence and uniqueness of a quasi-limiting distribution when absorbing boundaries stabilize. We finally deal with some examples such as diffusions which are coming down from infinity.
\end{abstract} 

\textit{ Key words :}  $Q$-process, quasi-limiting distribution, quasi-ergodic distribution, moving boundaries, one-dimensional diffusion processes
\bigskip

\textit{ 2010 Mathematics Subject Classification. Primary : 60B10; 60F99;60J05; 60J25; 60J50. Secondary : 60J60} 
\bigskip

\tableofcontents

\section{Introduction}
\label{introduction}
Let $(\Omega, {\cal A}, \P)$ be a probability space and let $(X_t)_{t \in I}$ be a time-homogeneous Markov process (where $I=\Z_+$ or $\R_+$) defined on a metric state space $(E,d)$. We associate with $E$ a $\sigma$-algebra ${\cal E}$. For any $t \in I$, denote by ${\cal F}_{t} = \sigma(X_s, 0 \leq s \leq t)$ the $\sigma$-field generated by $(X_s)_{0 \leq s \leq t \in I}$.  For any subset $F \subset E$, denote by ${\cal M}_1(F)$ the set of probability measures defined on $F$ and ${\cal B}(F)$ the set of the bounded measurable function $f \colon F \to \R$. 

We define, for each time $t \in I$, a subset $A_t \in {\cal E}$ called \textit{absorbing subset at time $t$} and we denote by $E_t$ the complement set of $A_t$ called \textit{survival subset at time $t$}. We will call $t \mapsto A_t$ \textit{the moving absorbing subset} or \textit{the moving absorbing boundary}. 
We denote by 
\begin{equation*}
\label{tau}
    \tau_A := \inf\{t \in I : X_t \in A_t\}
\end{equation*}
the reaching time of  $(A_t)_{t \in I}$ by the process $(X_t)_{t \in I}$. In all what follows, we will assume that $\tau_{A}$ is a stopping time for the filtration $(\cF_t)_{t \in I}$. This assumption holds when, for example, the Markov process $(X_t)_{t \in I}$ is continuous and all the sets $(A_t)_{t \in I}$ are closed. 

Even though the process $(X_t)_{t \in I}$ is time-homogeneous, we will associate to this process a family of probability measures $(\P_{s,x})_{s \in I, x \in E}$ such that, for any $s \in I$ and for any $x \in E$, $\P_{s,x}(X_s = x)=1$ and, for any measure $\mu$ on $E$, define $\P_{s,\mu} = \int \P_{s,x} d\mu(x)$. We denote by $\E_{s,x}$ and $\E_{s,\mu}$ the corresponding expectations. When the starting time is not needed, we will prefer the notation $\P_\mu := \P_{0,\mu}$ and $\E_\mu := \E_{0,\mu}$. 

In this paper, we will deal with the so-called \textit{$Q$-process}, \textit{quasi-limiting distribution} and \textit{quasi-ergodic distribution}, defined as below :
\begin{definition}
\label{definition}
\begin{enumerate}[i)]
\item We say that there is a \textit{$Q$-process} if there exists a family of probability measures $(\Q_{s,x})_{s \in I, x \in E_s}$ such that for any $s \leq t$, $x \in E_s$,
$$\P_{s,x}(X_{[s,t]} \in \cdot | \tau_{A} > T)  \underset{T \in I, T \to \infty}{\overset{(d)}{\longrightarrow}} \Q_{s,x}(X_{[s,t]} \in \cdot), $$
where, for any $u,v \in I$, $X_{[u,v]}$ is the trajectory of $(X_t)_{t \in I}$ between times $u$ and $v$ and where $(d)$ refers to the weak convergence of probability measures.
\item We say that $\alpha \in {\cal M}_1(E)$ is a \textit{quasi-limiting distribution} if, for some $\mu \in {\cal M}_1(E_0)$, 
\begin{equation}
\label{conve}
\P_{\mu}(X_t \in \cdot | \tau_A > t) \underset{t \in I, t \to \infty}{\overset{(d)}{\longrightarrow}} \alpha.\end{equation}
\item We say that $\beta \in {\cal M}_1(E)$ is a \textit{quasi-ergodic distribution} if there exists $\mu \in {\cal M}_1(E_0)$ such that,
\begin{itemize}
\item 
 $$\frac{1}{n} \sum_{k=0}^n \P_{\mu}(X_k \in \cdot | \tau_A > n) \underset{n \to \infty}{\overset{(d)}{\longrightarrow}} \beta$$
if $I = \Z_+$,
\item
$$\frac{1}{t} \int_0^t \P_{\mu}(X_s \in \cdot | \tau_A > t)ds \underset{t \to \infty}{\overset{(d)}{\longrightarrow}} \beta$$
if $I = \R_+$.
\end{itemize}
\end{enumerate} 
\end{definition}
For Markov processes absorbed by non-moving boundaries (i.e. $A_t = A_0$ for any $t \in I$), the notions of $Q$-process, quasi-limiting distribution and quasi-ergodic distribution are dealt with by the theory of \textit{quasi-stationarity}, which studies the asymptotic behavior of such processes conditioned not to be absorbed. In particular, the main object of this theory is the \textit{quasi-stationary distribution}, which is defined as a probability measure $\alpha$ such that, for all $t \in I$,
\begin{equation}
\label{qsd}
\P_{\alpha}(X_t \in \cdot | \tau_A > t) = \alpha.
\end{equation}   
In the time-homogeneous setting, it is well known that the notions of quasi-stationary distributions and quasi-limiting distributions are equivalent. 
The interested reader can see \cite{MV2012} and \cite{CMSM} for an overview of the theory. In particular, these monographes give some results about the existence of quasi-limiting distributions and $Q$-processes for several processes : Markov chains on finite state space and countable space, birth and death processes, diffusion processes and others. In a same way, existence of quasi-ergodic distributions has been also shown for such processes. The reader can see \cite{GHY2016, ZLS2013, BR1999} for the study on quasi-ergodic distributions in a very general framework.  

In this article, we will be interested in the existence of a $Q$-process, a quasi-limiting distribution and a quasi-ergodic distribution when $(A_t)_{t \in I}$ depends on the time. More precisely, we want to generalize the results presented in \cite{OCAFRAIN2017}, which were only obtained for discrete-time Markov chains defined on finite state space. In particular, this paper showed, in a first time, that the notion of quasi-stationary distribution as defined by the relation \eqref{qsd}, considering that the boundary $(A_n)_{n \in \Z_+}$ is moving, is not well-defined. If moreover the boundary moves periodically, then the notion of quasi-limiting distribution is not well-defined either. Finally, it is shown in \cite{OCAFRAIN2017} that, still considering periodic moving boudaries, the probability measure 
$$\frac{1}{n} \sum_{i=1}^n \P_\mu(X_k \in \cdot | \tau_A > n)$$
converges weakly towards a quasi-ergodic distribution $\beta$ if the initial measure $\mu$ satisfies some assumptions (see \cite[Theorem 3] {OCAFRAIN2017}). Moreover, the $Q$-process is well-defined.

Hence, the main goal of this paper is to recover these results for a more wide class of Markov processes, such as diffusion processes. In particular, we want to know if the quasi-ergodic distribution is still well-defined for such processes when the moving boundary $(A_t)_{t \in I}$ is periodic. 
 
The main assumption that $(X_t)_{t \in I}$ will satisfy in this paper will be based on a Champagnat-Villemonais type condition. When $A$ does not depend on $t$, Champagnat and Villemonais introduce in \cite{CV2014} the following assumption : there exists $\nu \in {\cal M}_1(E)$ such that
\begin{enumerate}[({A}1)]
\item there exist $t_0 \geq 0$ and $c_1 > 0$ such that
$$\forall x \in E_0, ~~\P_{x}(X_{t_0} \in \cdot | \tau_A > t_0) \geq c_1 \nu;$$
\label{CV1}
\item there exists $c_2 > 0$ such that : $\forall x \in E_0$, $\forall t \geq 0$,
$$\P_{\nu}(\tau_A > t) \geq c_2 \P_{x}(\tau_A > t).$$
\label{CV2} 
\end{enumerate}
In particular, (A1) can be seen as a conditional version of Doeblin's condition. Then the authors show that (A1)-(A2) are equivalent to an exponential uniform convergence of the total variation distance between the conditional probability $\P_\mu(X_t \in \cdot | \tau_A > t)$ and the unique quasi-stationary distribution. Moreover, one has, under these assumptions, the existence of a $Q$-process, as well as the existence and the uniqueness of the quasi-ergodic distribution (see \cite{CV2017} for this last result).

Champagnat and Villemonais also adapt the assumptions (A1)-(A2) to the time-inhomogeneous setting in the paper \cite{CV2016}. This time-inhomogeneous version will be used to our purpose; we refer the reader to the Section 3 for more details about it. In particular, the Assumption (A'), which is introduced in Section 2, is a particular case of their time-inhomogeneous conditions. In this paper, the existence of a $Q$-process will be proved, as well as the exponential convergence in total variation of the probability measure $\P_{s,x}( X_{[s,t]} \in \cdot | \tau_A > T)$ towards the $Q$-process, when $T$ goes to infinity. In the same way as in the paper \cite{CV2017}, this exponential convergence implies that the existence and the uniqueness of the quasi-ergodic distribution is equivalent to an ergodic theorem for the $Q$-process. In particular, this corollary will be applied for periodic moving boundaries to show the existence and the uniqueness of a quasi-ergodic distribution. 

Moreover, the case of a non-increasing converging moving boundary (the notion of convergence will be defined further) will be dealt with. In this case, one can expect an asymptotic homogeneity of the conditional probability $\P_{s,x}(X_{s+t} \in \cdot | \tau_A > s+t)$ when $s$ goes to infinity (in the meaning of Proposition \ref{uc} in Subsection \ref{subsection-conv}), and use this property to show the existence of a quasi-limiting distribution. It will be therefore shown in this paper that, under the Champagnat-Villemonais condition and some extra assumptions, there exists a unique quasi-limiting distribution for which the weak convergence \eqref{conve} holds for any initial law $\mu$. 

This paper ends with an application of these results to a one-dimensional diffusion process \textit{coming down from infinity}, that is to say, for some $t \geq 0$ and $y \in \R_+$,
$$\lim_{x \to +\infty} \P_x(\tau_y < t) > 0,$$
where $\tau_y$ is the hitting time of $y$ by $(X_t)_{t \in I}$. It will be shown that, under additional assumptions, the diffusion process $(X_t)_{t \geq 0}$ satisfies the time-inhomogeneous Champagnat-Villemonais conditions. 

\section{Assumptions and general results}
\label{general-results}
From now on, assume that $(A_t)_{t \in I}$ could depend on time and for any $s \in I$ and $x \in E_s$,
$$\P_{s,x}(\tau_{A} < \infty) = 1,$$
and, in order to make sense of the conditioning, we will assume that for any $s \leq t$ and any $x \in E_s$, 
$$\P_{s,x}(\tau_{A} > t) > 0.$$ 
%For any $s \in I$, if we want to make sense to the probability measure $\P_\mu(\cdot | \tau_{A \circ \theta_s} > t)$ for some $t \in I$, the support of the initial law $\mu$ have to be included in $E_s$. If $Supp(\mu) \cap E_s \ne \emptyset$, then we define for some $t$
%$$\P_\mu(\cdot | \tau_{A \circ \theta_s} > t) =   \P_{\tilde{\mu}}(\cdot | \tau_{A \circ \theta_s} > t)$$
%where $\tilde{\mu}$ is the law $\mu$ condtioned to be in $E_s$
%$$\tilde{\mu} = \frac{\mu(\cdot \cap E_s)}{\mu(E_s)}$$
%We will denote by ${\cal P}$ the following set
%$${\cal P} = \{\mu \in {\cal M}_1(E) : \forall s \in I, Supp(\mu) \cap E_s \ne \emptyset \}$$
%In other words, ${\cal P}$ is the set of probability measures $\mu$ such that $\P_\mu(\cdot | \tau_{A \circ \theta_s} > t)$ is well-defined for any $s,t \in I$. Note that ${\cal P}$ is non-empty since 
%$${\cal M}_1(E_{min}) \subset {\cal P}$$
%Naturally, we also assume the assumption of irreducibility \eqref{irreducibility} given in the Introduction. As a result, the notion of quasi-stationary distribution does not make sense (see \cite{OCAFRAIN2017}).
We introduce now the main assumption adapted from the Champagnat-Villemonais conditions introduced in \cite{CV2014}:
\begin{aprime}
\label{assumption}
There exist $(\nu_s)_{s \in I}$ a sequence of probability measures $(\nu_s \in {\cal M}_1(E_s)$ for each $s \in I$), and $t_0,c_1,c_2 > 0$ such that
\begin{enumerate}[({A'}1)]
\item
$\forall s \in I, \forall x \in E_s$, $$\P_{s,x}(X_{s+t_0} \in \cdot | \tau_{A} > s+t_0) \geq c_{1} \nu_{s+t_0};$$
\item $\forall s \leq t, \forall x \in E_s$,
$$\P_{s,\nu_s}(\tau_{A} > t) \geq c_{2} \P_{s,x}(\tau_{A} > t).$$
\end{enumerate}
\end{aprime}
In this section, the main results and contributions in this paper are presented. Let us recall that the \textit{total variation distance} between two probability measures $\mu$ and $\nu$ on $E$ is defined by 
$$||\mu - \nu||_{TV} := \sup_{f \in \cB_1(E)} |\mu(f) - \nu(f)|,$$
where $\cB_1(E) := \{ f \in \cB(E) : ||f||_\infty \leq 1\}$ and where the notation 
$$\mu(f) := \int_E f(x) \mu(dx)$$
is used. Then let us state our main result :
\begin{theorem}
\label{theoreme-1}
Under Assumption (A'), there exists a $Q$-process (Definition \ref{definition} (i)). Furthermore, there exists $C,\lambda > 0$ such that, for any $s \leq t \leq T$ and $x \in E_s$,
\begin{align*}||\P_{s,x}(X_{[s,t]} \in \cdot | \tau_{A} > T) - \Q_{s,x}(X_{[s,t]} \in \cdot) ||_{TV} \leq  C e^{-\lambda (T-t)}.
\end{align*}
\end{theorem}
Explicit formulae will be provided later in Theorem \ref{expo-conv-thm}, whose the statement is more precise than the one of the previous theorem.
 
As written in the introduction, two specific behavior of moving behavior will be studied in this paper :
\begin{itemize}
\item Periodic moving boundaries,
\item Non-increasing converging moving boundaries, i.e. $A_t \subset A_s$ for all $s \leq t$ and 
\begin{equation}
\label{A_inf}
A_\infty := \bigcap_{t \in I} A_t \ne \emptyset.
\end{equation}
\end{itemize}
In the periodic case, the following theorem is shown in the Subsection \ref{subsection-per}.
\begin{theorem}
\label{qed-presentation}
If $(X_t)_{t \geq 0}$ satisfies Assumption (A'), %  replacing \eqref{UC} by 
%\begin{align*}
%&\lim_{s \to \infty} \sup_{t,T \geq 0} ||\P_\mu(X_t \in \cdot | \tau_{A \circ \theta_s} > t+T) - \P_\mu(X_t \in \cdot | \tau_{A_\infty} > t+T)||_{TV} = 0
%\end{align*}
then there exists a unique probability measure $\beta$ such that, for any $\mu \in {\cal M}_1(E_0)$,
$$\frac{1}{t} \int_{0}^t \P_\mu(X_s \in \cdot | \tau_A > t)ds \underset{t \to \infty}{\overset{(d)}{\longrightarrow}} \beta.$$
\end{theorem}
The expression of the quasi-ergodic distribution $\beta$ is spelled out later in Theorem \ref{qed}. 

For converging non-increasing moving boundaries, some extra assumptions are needed to state the theorems. The following assumptions will be useful to show the asymptotic homogeneity of the conditional probability $\P_{s,x}(X_{s+t} \in \cdot | \tau_A > s+t)$ :
\begin{hhom}
\label{indis}
\begin{enumerate}[a)]
\item \textbf{Strong Markov property:} For any $\tau$ stopping time of $\cF_t = \sigma(X_s, 0 \leq s \leq t)$ and for any $x \in E$, 
$$\P_x((X_{\tau+t})_{t \in I} \in \cdot, \tau < \infty | \cF_\tau) = \1_{\tau < \infty} \P_{X_\tau}((X_t)_{t \in I} \in \cdot);$$
\item \textbf{Convergence in law for the hitting times :} For any $x \in E_0$ and for any $t \in I$,
$$\P_{s,x}(\tau_A > s+t) \underset{s \to + \infty}{\longrightarrow} \P_x(\tau_{A_\infty} > t),$$ where $\tau_{A_\infty} := \inf\{t \geq 0 : X_t \in A_\infty\}$; 
\item \textbf{Time-continuity:} For any $x \in E_0$ and $s \geq 0$, the functions $t \to \P_{s,x}(\tau_{A} > t)$ and $t \to \P_x(\tau_{A_\infty} > t)$ are continuous;
%\item \textbf{State-continuity:} For any $t \in I$, the function $x \to \P_x(\tau_{A_\infty} > t)$ is continuous.
\end{enumerate}
\end{hhom}
 Moreover, defining $E_\infty$ as the complement of $A_\infty$, let us set the additional following assumption :
  \begin{hinfty}
There exists a unique probability measure $\alpha_\infty \in \cM_1(E_\infty)$ such that, for any $\mu \in \cM_1(E_\infty)$ and $t \geq 0$,
\begin{equation}
\label{gamma}
||\P_\mu(X_t \in \cdot | \tau_{A_\infty} > t) - \alpha_\infty||_{TV} \leq C_\infty e^{-\gamma_\infty t},\end{equation}
where $C_\infty, \gamma_\infty > 0$.  
\end{hinfty}
Under Assumption $(H_\infty)$, it is well known (see \cite{MV2012}) that there exists $\lambda_\infty > 0$ such that, for any $t \in I$,
\begin{equation}
\label{lambda}
\P_{\alpha_\infty}(\tau_\infty > t) = e^{- \lambda_\infty t},\end{equation}
and also a function $\eta_\infty$ (see \cite[Proposition 2.3]{CV2014}) positive on $E_\infty$ and vanishing on $A_\infty$ such that, for any $x \in E_\infty$, 
\begin{equation}
\label{def-eta}
\eta_\infty(x) = \lim_{t \to \infty} e^{\lambda_\infty t} \P_x(\tau_{A_\infty} > t).\end{equation}
To state our result of convergence, the following assumption is needed :
\begin{hinftyprime}
There exists $s_0 \in I$ and $x_0 \in E_{s_0}$ such that, for any $s \geq s_0$, 
$$\E_{s,x_0}\left[e^{\lambda_\infty \tau_A} \eta_\infty(X_{\tau_A})\right] < + \infty,$$
and 
$$\lim_{s \to \infty} \E_{s,x_0}\left[e^{\lambda_\infty (\tau_A-s)} \eta_\infty(X_{\tau_A})\right] = 0.$$
\end{hinftyprime}
Somehow, this previous assumption impose that the boundary $(A_t)_{t \in I}$ decreases fast enough towards $A_\infty$. 

Then, considering non-increasing converging moving boundaries, one has the following statement :
\begin{theorem}
\label{first-thm}
Under the assumptions (A'), $(H_{hom})$ $(H_\infty)$ and $(H'_\infty)$,
 %that there exists $s_0 \in I$ such that $E_{min}(s_0) \cap E_\infty \ne \emptyset$ and that there exists $\mu_0 \in {\cal M}_1(E_{min}(s_0) \cap E_\infty)$ such that 
%\begin{equation}
%\label{UC}
%\lim_{s_0 \leq s \to \infty} \sup_{t \in I} ||\P_{\mu_0}(X_t \in \cdot | \tau_{A \circ \theta_s} > t) - \P_{\mu_0}(X_t \in \cdot | \tau_{A_\infty} > t)||_{TV}= 0
%\end{equation}
for any $\mu \in {\cal M}_1(E_0)$, 
$$\P_\mu(X_t \in \cdot | \tau_A > t) \underset{t \to \infty}{\overset{(d)}{\longrightarrow}} \alpha_\infty,$$
where $\alpha_\infty$ is the quasi-stationary distribution defined in the Assumption ($H_\infty$).
\end{theorem}
The existence and the uniqueness of the quasi-ergodic distribution is also shown in the Subsection \ref{subsection-conv}.

%Hence Assumption \ref{assumption-2} is stronger than the Champagnat-Villemonais condition. If we assume the following Harnack inequality : for any $s \geq 0$, there exists $C_s > 0$ and a compact set $K_s$ such that for any $t \geq 0$
%$$\sup_{x \in E_s} \P_x(\tau_{A \circ \theta_s} > t) = \sup_{x \in K_s} \P_x(\tau_{A \circ \theta_s} > t) \leq C_s \inf_{x \in K_s} \P_x(\tau_{A \circ \theta_s} > t)$$
%then Assumption \ref{assumption-2} is satisfied, and all the previous results are satisfied.

\section{Exponential convergence towards $Q$-process and quasi-ergodic distribution}

First, we recall Proposition 3.1. and Theorem 3.3. of \cite{CV2016}. In their paper, N. Champagnat and D. Villemonais took a time-inhomogeneous Markov process and $(Z_{s,t})_{s \leq t}$ a collection of multiplicative nonnegative random variables (i.e. satisfying $Z_{s,r}Z_{r,t} = Z_{s,t},~~\forall s \leq r \leq t$) such that, for any $s \leq t \in I$ and $x \in E_s$, $\E_{s,x}(Z_{s,t}) > 0$ and $\sup_{y \in E_s} \E_{s,y}(Z_{s,t}) < \infty$. In our case, $(X_t)_{t \in I}$ is time-homogeneous, however the penalization $(Z_{s,t})_{s \leq t}$ we shall use is given by
$$Z_{s,t} = \1_{\tau_{A} > t},~~~~\forall s \leq t.$$
and is time-inhomogeneous because $(A_t)_{t \in I}$ depends on $t$. For any $s \leq t$, define by
$$\phi_{t,s} \colon \mu \mapsto \P_{s,\mu}(X_{t} \in \cdot |\tau_{A} > t).$$
Then, by Markov property, the family $(\phi_{t,s})_{s \leq t}$ is a semi-flow, that is : for any $r \leq s \leq t$,
\begin{equation}
\label{semi-flow-property}
\phi_{t,r} = \phi_{t,s} \circ \phi_{s,r}.
\end{equation}
Let $t_0 \in I$. 
%For any $s \in I$, let $(t^{(i)}_s)_{i \in \Z_+}$ be an increasing sequence of times such that $t^{(0)}_s = s$ and satisfying
%\begin{equation}
%\label{compose}
%t^{(i)}_{t^{(j)}_s} = t^{(i+j)}_s,~~~~\forall i,j \in \Z_+\end{equation}
%$$
%t_s^{(i)} = \left\{
    %\begin{array}{ll}
        %s & \mbox{if } i=0 \\
        %t_s^{(i-1)} + t_{t_s^{(i-1)}} & \mbox{otherwise}
    %\end{array}
%\right.
%$$
%where we use implicitly a function $s \to t_s$ where $t_s$ is as Assumption \ref{assumption}. 
%Formally, we can consider that the function $s \to t_s$ is defined by
%$$s \to \min_{t \geq 1, t \in {\cal T}_s} t$$
%where ${\cal T}_s$ is the set containing all the $t_s$ as in Assumption \ref{assumption}. 
%We define for any $s \leq t \in I$
%\begin{equation}
%\label{ist}
%i(s,t) := \max\{i \in \Z_+ : t_s^{(i)} \leq t\}
%\end{equation}
%For any $s \in I$, we define
%\begin{align*}
%&d_s = \inf_{t \geq 0,x_1,x_2 \in E_s} \frac{\P_{v_{s,x_1,x_2}}(\tau_{A \circ \theta_{t^{(1)}_s} }> t)}{\sup_{x \in E_s} \P_{x}(\tau_{A \circ \theta_{t^{(1)}_s}} > t)}\\
%&d'_s =  \inf_{t \geq 0} \frac{\P_{v_{s}}(\tau_{A \circ \theta_{t^{(1)}_s}} > t)}{\sup_{x \in E_s} \P_{x}(\tau_{A \circ \theta_{t^{(1)}_s}} > t)}
%\end{align*}
For any $s \geq t_0$ and $x_1,x_2 \in E_{s-t_0}$, define $v_{s,x_1,x_2}$ and $v_{s}$ as follows :
\begin{align}
\label{vessie}
&v_{s,x_1,x_2} = \min_{j=1,2} \phi_{s,s-t_0} (\delta_{x_j});\\
&v_{s} =  \min_{x \in E_{s-t_0}} \phi_{s,s-t_0} (\delta_{x}),
\label{vessie-prime}
\end{align}
where the minimum of several measures is understood as the largest measure smaller than all the considered measures. 
Finally, for any $s \geq t_0$, define
\begin{align}
\label{dsi}
&d_{s} = \inf_{t \geq 0,x_1,x_2 \in E_{s-t_0}} \frac{\P_{s,v_{s,x_1,x_2}}(\tau_{A }> t+s)}{\sup_{x \in E_{s}} \P_{s,x}(\tau_{A} > t+s)};\\
&d'_{s} =  \inf_{t \geq 0} \frac{\P_{s,v_{s}}(\tau_{A} > s+t)}{\sup_{x \in E_{s}} \P_{s,x}(\tau_{A} > s+t)}.
\label{dsiprime}
\end{align}
In particular, $v_s \leq v_{s,x_1,x_2}$ and $d_s' \leq d_s$. 
We can now state Proposition 3.1. and Theorem 3.3. of \cite{CV2016} in our situation (see \cite{CV2016} for a more general framework) : 
\begin{proposition}[Proposition 3.1. (\cite{CV2016})]
\label{proposition-CV}
For any $s \in I$ such that $d'_s > 0$ and $y \in E_s$, there exists a finite constant $C_{s,y}$ only depending on $s$ and $y$ such that, for all $x \in E_s$ and $t,u \geq s+t_0$ with $t \leq u$,
\begin{equation}
\label{4}
\left| \frac{\P_{s,x}(\tau_{A } > t)}{\P_{s,y}(\tau_{A} > t)} - \frac{\P_{s,x}(\tau_{A} > u)}{\P_{s,y}(\tau_{A} > u)} \right| \leq C_{s,y} \inf_{v \in [s+t_0,t]} \frac{1}{d'_v} \prod_{k=0}^{\left \lfloor \frac{v-s}{t_0} \right \rfloor-1} (1-d_{v-k}).
\end{equation}
In particular, if
\begin{equation}
\label{conv}
\liminf_{t \in I, t \to \infty} \frac{1}{d'_t} \prod_{k=0}^{\left \lfloor \frac{t-s}{t_0} \right \rfloor-1} (1-d_{t-k})= 0,
\end{equation}
for all $s \geq 0$, there exists a positive bounded function $\eta_s : E_s \to (0,\infty)$ such that
$$\lim_{t \to \infty}\frac{\P_{s,x}(\tau_{A} > t)}{\P_{s,y}(\tau_{A} > t)} = \frac{\eta_s(x)}{\eta_s(y)},~~~~\forall x,y \in E_s,$$
where, for any fixed $y$, the convergence holds uniformly in $x$. $\eta_s$ satisfies for all $x \in E_s$ and $s \leq t \in I$,
$$\E_{s,x}(\1_{\tau_{A} > t} \eta_t(X_ {t})) = \eta_s(x).$$
In addition, the function $s \to ||\eta_s||_\infty$ is locally bounded on $[0,\infty)$.
\end{proposition}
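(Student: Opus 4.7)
The plan is to follow the strategy of Champagnat--Villemonais in \cite{CV2016}, specialized here to the time-inhomogeneous multiplicative penalization $Z_{s,t}=\1_{\tau_{A\circ\theta_s}>t-s}$, and to establish the Cauchy-type estimate \eqref{4} with geometric decay in the number of backward steps of length $t_0$. The existence of $\eta_s$ will then be a consequence of completeness of $\R$ as soon as \eqref{conv} is assumed.

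First I would set up notation: for $r\in I$, $z\in E_r$ and $h\geq 0$, write $N_{r,h}(z):=\P_z(\tau_{A\circ\theta_r}>h)$, and rewrite the difference of ratios on the left-hand side of \eqref{4} via the Markov property at time $v-s$ for some $v\in[s+t_0,t]$. This pushes the Dirac masses $\delta_x$ and $\delta_y$ forward into probability measures on $E_v$ (namely $\phi_{s,v}(\delta_x)$ and $\phi_{s,v}(\delta_y)$, weighted by $N_{s,v-s}$), so that the difference of ratios becomes a comparison between two integrals of the same function $z\mapsto N_{v,t-v}(z)$, respectively $z\mapsto N_{v,u-v}(z)$, against closely related probability measures on $E_v$.

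The heart of the argument is a coupling/contraction at each backward step of size $t_0$. By definition of $v_{r,x_1,x_2}$ in \eqref{vessie}, for any $x_1,x_2\in E_{r-t_0}$ one has $\phi_{r-t_0,r}(\delta_{x_j})\geq v_{r,x_1,x_2}$ for $j=1,2$; decomposing $\phi_{r-t_0,r}(\delta_{x_j})=v_{r,x_1,x_2}+\bigl(\phi_{r-t_0,r}(\delta_{x_j})-v_{r,x_1,x_2}\bigr)$ exhibits a common part of mass $v_{r,x_1,x_2}(E_r)$ on which the two propagated laws coincide, and a remainder of mass $1-v_{r,x_1,x_2}(E_r)$ on which they may differ. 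The definition of $d_r$ in \eqref{dsi} allows one to compare the contribution of the common part against $\sup_{z\in E_r}N_{r,\cdot}(z)$, and extracts a multiplicative factor bounded by $(1-d_r)$ for the remaining oscillation. Iterating this step $\lfloor(v-s)/t_0\rfloor$ times produces the advertised product. The prefactor $1/d'_v$ appears when one reduces $\P_y(\tau_{A\circ\theta_s}>t-s)$ in the denominator of the difference to the same supremum via the minimum measure $v_v$ of \eqref{vessie-prime} and \eqref{dsiprime}. The $y$-dependent quantities generated by the initial $t_0$-step (before the iterative scheme starts) are absorbed into $C_{s,y}$, and taking the infimum over admissible $v\in[s+t_0,t]$ yields \eqref{4}.

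Under \eqref{conv} the right-hand side of \eqref{4} tends to $0$ as $t,u\to\infty$, so for each fixed $y\in E_s$ the family $(N_{s,t-s}(x)/N_{s,t-s}(y))_t$ is Cauchy; fixing a reference point $y_0$ and defining $\eta_s(x)$ as this limit gives the desired function. Uniformity in $x$ is immediate because the right-hand side of \eqref{4} does not depend on $x$. Positivity, boundedness, and local boundedness of $s\mapsto||\eta_s||_\infty$ follow from Assumption \ref{assumption}(2) applied to $y_0$, and the identity $\E_x(\1_{\tau_{A\circ\theta_s}>t-s}\eta_t(X_{t-s}))=\eta_s(x)$ comes from applying the Markov property at time $t-s$ inside $N_{s,T}(x)$ and passing to the limit $T\to\infty$ after dividing by $N_{s,T}(y_0)$. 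The main obstacle is the careful bookkeeping in the iterative contraction: the minimum measures $v_{r,x_1,x_2}$ produced at each backward step depend on the pair of starting points propagated from the previous step, so one must verify that the uniform-in-$(x_1,x_2)$ definition \eqref{dsi} is exactly the right object to close the induction and yield a bound depending only on $s$, $y$, $t_0$ and the $d_r$'s.
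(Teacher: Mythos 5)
The paper gives no proof of Proposition \ref{proposition-CV}: it is stated as a direct citation of Proposition 3.1 in \cite{CV2016}, adapted to the present notation, and the reader is referred to that paper for the argument. Your sketch is a faithful reconstruction of the Champagnat--Villemonais proof that \cite{CV2016} actually carries out: the Markov decomposition at an intermediate time $v$, which turns each ratio into a prefactor times a ratio of integrals of $N_{v,\cdot}$ against the pushed-forward conditional laws; the Dobrushin-type contraction along the common minorizing (sub-)measure $v_{r,x_1,x_2}$ at each $t_0$-step; the role of $d_r$ in controlling the contraction factor; the origin of the $1/d'_v$ prefactor when the denominator $\P_y(\tau_{A\circ\theta_s}>t-s)$ is normalized by the supremum appearing in \eqref{dsiprime}; the absorption of the initial-step constants into $C_{s,y}$; and the Cauchy argument yielding $\eta_s$, together with the passage to the limit in the Markov identity to get the harmonicity relation. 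You have also correctly flagged the one genuinely delicate point, namely that after a single contraction step the propagated laws are no longer Dirac masses, which is precisely why \eqref{dsi} is defined with an infimum uniform over all pairs $(x_1,x_2)$: one disintegrates each pushed-forward measure as a mixture of Diracs and applies the uniform estimate term by term to close the induction. So your caveat is well placed rather than a hidden flaw, and the plan, if fleshed out with the algebra, would reproduce the cited result.
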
 
\begin{theorem}[Theorem 3.3 (\cite{CV2016})]
\label{theorem-CV}
Assume that 
$$\liminf_{t \in I, t \to \infty}\frac{1}{d'_t} \prod_{k=0}^{\left \lfloor \frac{t-s}{t_0} \right \rfloor-1} (1-d_{t-k}).$$
Then there exists $(\Q_{s,x})_{s \in I, x \in E_s}$ such that
$$\P_{s,x}(X_{[s,s+t]} \in \cdot | \tau_{A} >  T)  \underset{T \in I, T \to \infty}{\overset{(d)}{\longrightarrow}} \Q_{s,x}(X_{[s,s+t]} \in \cdot),~~\forall s,t \in I, x\in E_s, $$
and $\Q_{s,x}$ is given by, for all $s \leq t$ and $x\in E_s$,
\begin{equation}
\label{Q-proc}\Q_{s,x}(X_{[s,t]} \in \cdot) = \E_{s,x}\left( \1_{X_{[s,t]} \in \cdot} \frac{\1_{\tau_{A} > t} \eta_t(X_{t})}{\E_{s,x}(\1_{\tau_{A} > t} \eta_t(X_{t}))}\right) =  \E_{s,x}\left( \1_{X_{[s,t]} \in \cdot, \tau_{A} > t} \frac{\eta_t(X_{t})}{\eta_s(x)}\right).\end{equation}
Furthermore, under $(\Q_{s,x})_{s \in I, x \in E_s}$, $(X_t)_{t \in I}$ is a time-inhomogeneous Markov process. Finally, this process is asymptotically mixing in the sense that, for any $s \leq t$ and for any $\mu, \pi \in \cM_1(E_s)$,
$$||\Q_{s,\mu}(X_t \in \cdot) - \Q_{s,\pi}(X_t \in \cdot)||_{TV} \leq 2  \prod_{k=0}^{\left \lfloor \frac{t-s}{t_0} \right \rfloor-1} (1-d_{t-k}),$$
where 
\begin{equation}
\label{def}
\Q_{s,\mu}(\cdot) := \int_{E_s} \Q_{s,x}(\cdot) \mu(dx).\end{equation}
\end{theorem}
\begin{remark}
Note that, by the definition \eqref{def}, when $\mu$ is not a Dirac mass,
$$\Q_{s,\mu} \ne \lim_{T \to + \infty} \P_{s,\mu}( \cdot | \tau_A > T).$$
However, using the notation
\begin{equation}
\label{notation}
f * \mu(dx) := \frac{f(x) \mu(dx)}{\mu(f)},~~~~\forall \mu \in \cM_1(E_s),~\forall f \in \cB(E_s),\end{equation}
one has
$$ \lim_{T \to + \infty} \P_{s,\mu}( \cdot | \tau_A > T) = \Q_{s,\eta_s * \mu}.$$
\end{remark} 
\begin{remark}
We emphasize that,  in \cite{CV2016}, Proposition 3.1. and Theorem 3.3 are stated for any penalizations $(Z_{s,t})_{s \leq t}$. In particular, instead of considering absorbed Markov process, it is possible to work on renormalized Feynman-Kac semi-group taking 
$$Z_{s,t} = e^{\int_s^t g(X_u)du},$$
for some measurable functions $g$. Indeed, the specific choice of $Z_{s,t}$ we did in Proposition \ref{proposition-CV} and Theorem \ref{theorem-CV} does not play a role in the proofs.
\end{remark}
Under Assumption (A'), and considering $t_0 \in I$ as defined in Assumption (A'), one has, for any $s \in I$,
\begin{equation}
\label{dsprime}d_s \geq d'_s \geq c_{1}c_{2} > 0.
\end{equation}
Hence, by Proposition 1, \eqref{conv} is satisfied and, for any $s < s+t_0 \leq t \leq u$ and $x,y \in E_s$,
\begin{equation}
\label{maj}
\left| \frac{\P_{s,x}(\tau_{A} > t)}{\P_{s,y}(\tau_{A} > t)} - \frac{\P_{s,x}(\tau_{A} > u)}{\P_{s,y}(\tau_{A} > u)} \right| \leq C_{s,y} \times \frac{1}{c_1c_2} (1-c_1c_2)^{ \left \lfloor \frac{t-s}{t_0} \right \rfloor}. 
\end{equation}
From this last equation, we can expect an exponential convergence of the family of probability measures $(\P_{s,x}( X_{[s,t]} \in \cdot | \tau_{A} > T))_{T \geq t}$ towards the $Q$-process. Let us now reformulate the Theorem \ref{theoreme-1}, in a more precise manner :
\begin{theorem}
\label{expo-conv-thm}
Let $(X_t)_{t \geq 0}$ be a Markov process satisfying Assumption (A'). 
\begin{enumerate}
\item
Then, for any $s \leq t \leq T$ and $x \in E_s$, 
\begin{align*}||\P_{s,x}(X_{[s,t]} \in \cdot | \tau_{A} > T) - \Q_{s,x}(X_{[s,t]} \in \cdot) ||_{TV} \leq   \frac{1}{(c_1c_2)^3} (1-c_1c_2)^{\left\lfloor \frac{T-t}{t_{0}} \right\rfloor},
\end{align*}
where $\Q_{s,x}$ is defined by \eqref{Q-proc} in Theorem \ref{theorem-CV}.
\item If the $Q$-process satisfies an ergodic theorem, i.e. there exists a probability measure $\beta$ such that for any $x \in E_0$,
\begin{equation}
\label{beta}
\frac{1}{t} \int_0^t \Q_{0,x}(X_s \in \cdot)ds \underset{t \to \infty}{\overset{(d)}{\longrightarrow}} \beta,
\end{equation}
then for any $\mu \in \cM_1(E_0)$,
$$\frac{1}{t} \int_0^t \P_{\mu}(X_s \in \cdot|\tau_A > t) ds \underset{t \to \infty}{\overset{(d)}{\longrightarrow}} \beta.$$
\end{enumerate}
\end{theorem}
The statement of this theorem is implicitly written for $I = \R_+$. Obviously, the statement holds when $I = \Z_+$ and, from now, we will confuse integral and sum to deal with quasi-ergodic distributions when the time space $I$ will not be specify. 

\begin{proof}[Proof of Theorem \ref{expo-conv-thm}]
First we will show the exponential convergence towards the $Q$-process essentially thanks to \eqref{maj}. In the second step, we will show the existence and uniqueness of the quasi-ergodic distribution using a method similar to that used in \cite{CV2017}. 

\begin{enumerate}[{Step} 1]
\item : Exponential convergence towards the $Q$-process

We may extend \eqref{maj} to general initial law $\mu$ and $\pi$ : putting moreover $1/c_1c_2$ inside the constant, there exists $C_{s,\pi} > 0$ only depending on $s$ and $\pi$ such that, for any $s \leq t \leq u$,
$$\left| \frac{\P_{s,\mu}(\tau_{A} > u)}{\P_{s,\pi}(\tau_{A} > u)} - \frac{\P_{s,\mu}(\tau_{A} > t)}{\P_{s,\pi}(\tau_{A} > t)}\right| \leq C_{s,\pi} (1-c_1c_2)^{ \left\lfloor \frac{t-s}{t_{0}} \right\rfloor }.$$
Thus, by Theorem \ref{theorem-CV} and letting $u \to \infty$,
\begin{equation}
\label{expo-conv}
\left| \frac{\mu(\eta_s)}{\pi(\eta_s)} - \frac{\P_{s,\mu}(\tau_{A} > t)}{\P_{s,\pi}(\tau_{A} > t)}\right| \leq C_{s,\pi}(1-c_1c_2)^{ \left\lfloor \frac{t-s}{t_{0}} \right\rfloor }.\end{equation}
Using Markov property,  for any $s \leq t \leq T$ and for any $x \in E_s$,
\begin{align*}
\P_{s,x}(X_{[s,t]} \in \cdot | \tau_{A} > T) &= \E_{s,x}\left( \1_{X_{[s,t]} \in \cdot} \frac{\1_{\tau_{A} > t}\P_{t,X_{t}}(\tau_{A} > T)}{\P_{s,x}(\tau_{A} > T)}\right) \\
&= \E_{s,x}\left( \1_{X_{[s,t]} \in \cdot} \frac{\1_{\tau_{A} > t}\P_{t,X_{t}}(\tau_{A} >T)}{\E_{s,x}(\1_{\tau_{A} >t} \P_{t,X_{t}}(\tau_{A} >T))}\right) \\
&= \E_{s,x}\left( \1_{X_{[s,t]} \in \cdot}\1_{\tau_{A} > t} \frac{\P_{t,X_{t}}(\tau_{A} > T)}{\P_{s,x}(\tau_{A } >t) \E_{s,x}(\P_{t,X_{t}}(\tau_{A } > T) | \tau_A > t)}\right) \\
&= \E_{s,x}\left( \1_{X_{[s,t]} \in \cdot}\1_{\tau_{A} > t} \frac{\P_{t,X_{t}}(\tau_{A} > T)}{\P_{s,x}(\tau_{A } >t) \P_{t,\phi_{t,s}(\delta_x)}(\tau_{A } > T))}\right).
\end{align*} 
Using this last equality and \eqref{Q-proc}, for any $s \leq t \leq T$, for any $x \in E_s$ and any $B \in {\cal E}$,
\begin{multline*}
\left| \P_{s,x}(X_{[s,t]} \in B | \tau_{A} > T)- \Q_{s,x}(X_{[s,t]} \in B) \right| \\ = \left| \E_{s,x}\left( \frac{\1_{X_{[s,t]} \in B}\1_{\tau_{A } > t}}{\P_{s,x}(\tau_{A } > t) } \left(\frac{\P_{t,X_{t}}(\tau_{A } > T)}{\P_{t,\phi_{t,s}(\delta_x)}(\tau_{A} >T))} - \frac{\eta_{t}(X_{t})}{\phi_{t,s}(\delta_x)(\eta_{t})}\right)\right) \right| \\
\leq C_{t,\phi_{t,s}(\delta_x)}(1-c_1c_2)^{ \left\lfloor \frac{T-t}{t_{0}} \right\rfloor } \E_{s,x}\left( \frac{\1_{X_{[s,t]} \in B}\1_{\tau_{A } >t}}{\P_{s,x}(\tau_{A } >t) }\right),
\end{multline*}
where the last inequality follows from \eqref{expo-conv}. Moreover, for any $s \leq t$, 
$$\E_{s,x}\left( \frac{\1_{X_{[s,t]} \in B}\1_{\tau_{A } >t}}{\P_x(\tau_{A } > t) }\right) = \P_{s,x}\left({X_{[s,t]} \in B}|{\tau_{A } >t}\right) \leq 1,~~~~\forall B \in {\cal E}.$$
Hence, for any $s \leq t$, $x \in E_s$ and $B \in {\cal E}$,
$$\left| \P_{s,x}(X_{[s,t]} \in B | \tau_{A} >T)- \Q_{s,x}(X_{[s,t]} \in B) \right| \leq  C_{t,\phi_{t,s}(\delta_x)}(1-c_1c_2)^{ \left\lfloor \frac{T-t}{t_{0}} \right\rfloor }.$$
Without loss of generality, one can assume $t-s \geq t_0$, since for any $t \leq s + t_0$,
$$\{X_{[s,t]} \in B\} = \{X_{[s,s+t_0]} \in \tilde{B}\},$$
where $\tilde{B} := \{\omega : [s,s+t_0] \to E : \omega_{[s,t]} \in B\}$ is a measurable set.  \\
Note that \cite{CV2016} provides an explicit formula of $C_{s,y}$ in the proof of Proposition 3.1. for $s$ and $y$ fixed. Adapting this formula for a general probability measure $\pi$ and recalling that we put the term $1/c_1c_2$ inside $C_{s,\pi}$, one explicit formula of $C_{s,\pi}$ for $s \in I$ can be
\begin{equation}
\label{cont}
C_{s,\pi} = \frac{1}{c_1c_2} \frac{\sup_{z \in E_s} \P_{s,z}(\tau_{A } > v_s)}{d'_{v_s} \P_{s,\pi}(\tau_{A} > v_s )},\end{equation}
where $v_s \in I$ is the smaller time $v \geq s+t_0$ such that $d'_v > 0$ (with $d'_v$ as defined in \eqref{dsiprime}). Then, by \eqref{dsprime}, $v_s = s+t_0$ and $d'_{s+t_0} \geq c_1c_2$, so
$$C_{s,\pi} \leq C \frac{\sup_{z \in E_s} \P_{s,z}(\tau_A > s+t_0)}{\P_{s,\pi}(\tau_A > s+t_0)},~~~~\forall s \geq 0, \forall \pi \in \cM_1(E_s),$$
where we set $C := \frac{1}{(c_1 c_2)^2}$. Thus, for any $x \in E_s$,
$$C_{t,\phi_{t,s}(\delta_x)} \leq C \frac{\sup_{z \in E_{t}} \P_{t,z}(\tau_A > t+t_0)}{\P_{t,\phi_{t,s}(\delta_x)}(\tau_A > t+t_0)}.$$
Now, the following lemma is needed : \begin{lemma} \label{petit} For any $s \leq t$ such that $t-s \geq t_0$, for any $x \in E_s$,
\begin{equation} \label{aim} \phi_{t,s}(\delta_x) \geq c_1 \nu_{t}.\end{equation}
In particular, the condition (A'1) holds replacing $t_0$ by any time $t_1$ greater than $t_0$. \end{lemma}
The proof of this lemma is postponed at the end of this proof. Then, by Lemma \ref{petit} and using (A'2),
\begin{align*}\P_{t,\phi_{t,s}(\delta_x)}(\tau_A > t+t_0) &\geq c_1 \P_{t,\nu_{t}}(\tau_A > t+t_0)\\  &\geq c_1 c_2 \sup_{z \in E_{t}} \P_{t,z}(\tau_A > t+t_0).\end{align*}
As a result $C_{t,\phi_{t,s}(\delta_x)} \leq 1/(c_1c_2)^3$, and 
$$\left| \P_{s,x}(X_{[s,t]} \in B | \tau_{A} > T)- \Q_{s,x}(X_{[s,t]} \in B) \right| \leq  \frac{1}{(c_1c_2)^3} (1-c_1c_2)^{ \left\lfloor \frac{T-t}{t_{0}} \right\rfloor }.$$
This concludes the first step.
\item : Convergence towards the quasi-ergodic distribution

We just proved that for any $0 \leq s \leq t$ and $x \in E_0$,
\begin{equation*}
||\P_x(X_{s} \in \cdot | \tau_{A} > t) - \Q_{0,x}(X_{s} \in \cdot) ||_{TV} \leq \frac{1}{(c_1c_2)^3 } (1-c_1c_2)^{ \left\lfloor \frac{t-s}{t_{0} }\right\rfloor  }.
\end{equation*}
Note that, in the same way, it was possible to consider a general initial law $\mu$ instead of a Dirac measure $\delta_x$, so that the inequality  
\begin{equation}
||\P_\mu(X_{s} \in \cdot | \tau_{A} > t) - \Q_{0,\eta_0 * \mu}(X_{s} \in \cdot) ||_{TV} \leq \frac{1}{(c_1c_2)^3 } (1-c_1c_2)^{ \left\lfloor \frac{t-s}{t_{0} }\right\rfloor  }
\label{expo-inequality}
\end{equation}
holds for any probability measure $\mu$ on $E_0$ (the notation $\eta_0 * \mu$ is defined in \eqref{notation}). 
%We shall use the following lemma whose proof is postponed to the end of this proof :
%\begin{lemma}
%\label{lemma} For any $x \in E_0$, there exists $C_{0,x} < \infty$ such that for any $s \geq 0$,
%$$C_{0,s,x} \leq C_{0,x} $$
%where, for general $s,t \in I$ and $x \in E_s$, $C_{s,t,x}$ is the same as Theorem \ref{expo-conv-thm}.
%\end{lemma}
As a result for any $0 \leq s \leq t$, for any $\mu \in \cM_1(E_0)$,
\begin{align*}
&\left|\left|\frac{1}{t} \int_0^t \P_\mu(X_{s} \in \cdot | \tau_{A} > t)ds - \frac{1}{t} \int_0^t \Q_{0,\eta_0*\mu}(X_{s} \in \cdot)ds \right|\right|_{TV}\\ 
&~~~~~~~~~~~~~~\leq  \frac{1}{(c_1c_2)^3  t} \int_0^t (1-c_1c_2)^{ \left\lfloor \frac{t-s}{t_{0} }\right\rfloor }ds \\
&~~~~~~~~~~~~~~\leq  \frac{1}{(c_1c_2)^3  t} \int_0^t (1-c_1c_2)^{  \frac{t-s}{t_{0} } -1}ds \\
&~~~~~~~~~~~~~~= \left(- \frac{t_0}{(c_1c_2)^3(1-c_1c_2) \log(1-c_1c_2) }\right) \times \frac{1-(1-c_1c_2)^{\frac{t}{t_{0}}}}{t}.
\end{align*}
Let $\beta$ as defined in \eqref{beta}. Then for any $\mu \in \cM_1(E_0)$ and $f \in \cB(E)$, 
\begin{align*}
  &\left|\frac{1}{t} \int_0^t \E_\mu(f(X_{s}) | \tau_{A} > t)ds - \beta(f) \right| \\
&\leq \left|\left|\frac{1}{t} \int_0^t \P_\mu(X_{s} \in \cdot | \tau_{A} > t)ds - \frac{1}{t} \int_0^t \Q_{0,\eta_0*\mu}(X_{s} \in \cdot)ds \right|\right|_{TV} + \left|\frac{1}{t} \int_0^t \E^\Q_{0,\eta_0*\mu}(f(X_{s}))ds -  \beta(f) \right| \\
&\leq \left(-\frac{t_0}{(c_1c_2)^3(1-c_1c_2) \log(1-c_1c_2)}\right) \times \frac{1-(1-c_1c_2)^{\frac{t}{t_{0}}}}{t}\\ &~~~~~~~~+ \left|\frac{1}{t} \int_0^t \E^\Q_{0,\eta_0*\mu}(f(X_{s}))ds -  \beta(f) \right|,
\end{align*}
where $\E^{\Q}_{0,\eta_0*\mu}$ is the expectation with respect to $\Q_{0,\eta_0*\mu}$.
Then, using the ergodic theorem for the $Q$-process,
$$\left|\frac{1}{t} \int_0^t \E_\mu(f(X_{s}) | \tau_{A} > t)ds - \beta(f) \right|  \underset{t \to \infty}{\longrightarrow} 0.$$
\end{enumerate}
\end{proof}

\begin{proof}[Proof of Lemma \ref{petit}]
Applying the condition (A'1) to the starting time $t-t_0$, 
$$\P_{t-t_0,y}(X_{t} \in \cdot, \tau_A > t) \geq c_1 \nu_{t}(\cdot) \P_{t-t_0,y}(\tau_A > t),~~~~\forall y \in E_{t-t_0}.$$
Then, for any probability measure $\mu$, integrating the last inequality over $\mu(dx)$ and dividing by $\P_{t-t_0,\mu}(\tau_A > t)$, one obtains
$$\P_{t-t_0,\mu}(X_{t} \in \cdot | \tau_A > t) \geq c_1 \nu_{t},~~~~\forall \mu \in \cM_1(E_{t-t_0}).$$
Hence, using the semi-flow property \eqref{semi-flow-property} of $(\phi_{t,s})_{s \leq t}$, for any $s \geq 0$ and $t \geq s+t_0$,
$$\phi_{t,s}(\delta_x) = \phi_{t,t-t_0} \circ \phi_{t-t_0,s}(\delta_x) = \P_{t-t_0,\phi_{t-t_0,s}(\delta_x)}(X_t \in \cdot | \tau_A > t) \geq c_1 \nu_{t},$$
which is \eqref{aim}.
\end{proof}

\begin{remark}
The time-homogeneity of the Markov process $(X_t)_{t \in I}$ does not play a particular role in the previous proof. In particular, Theorem \ref{expo-conv-thm} can be applied to time-inhomogeneous Markov process. However, in the next section, the time-homogeneity of $(X_t)_{t \in I}$ will be needed.
\end{remark}

\section{Some behaviors of moving boundaries and quasi-ergodicity}
In this section, we will focus on two types of behavior for the moving boundaries

\begin{enumerate}
\item when $A$ is $\gamma$-periodic with $\gamma > 0$;
\item when $A$ is non-increasing and converges at infinity towards $A_\infty \ne \emptyset$.
\end{enumerate}
Under Assumption (A'), the existence of the $Q$-process is provided by Theorem \ref{theorem-CV} (Theorem 3.3, \cite{CV2016}) and we get moreover an exponential convergence towards the $Q$-process provided by Theorem \ref{expo-conv-thm}. Now we want to investigate on the existence of a quasi-ergodic distribution in the two cases described above.   
\subsection{Quasi-ergodic distribution when $A$ is $\gamma$-periodic}
\label{subsection-per}

In this subsection, we will work on periodic moving boundaries and we will assume that the Markov process $(X_t)_{t \geq 0}$ satisfies the Assumption (A'). In particular, considering Assumption (A') for $s=0$, for any $x \in E_0$ and $t \in I$,
\begin{enumerate}
\item $\P_x(X_{t_0} \in \cdot | \tau_A > t_0) \geq c_{1} \nu_{t_0}$;
\item $\P_{\nu_0}(\tau_A > t) \geq c_{2} \P_x(\tau_A > t)$.
\end{enumerate}
As the Lemma \ref{petit} claims, any time $t_1$ greater than $t_0$ is suitable for the condition (A'1). Hence, without loss of generality, $t_0$ will be taken such that $t_0 = n_0 \gamma$ with $n_0 \in \N$.  
Moreover, by periodicity of $A$, it is easy to see that $(\nu_s)_{s \geq 0}$ can be chosen as a $\gamma$-periodic sequence. As a result, one has
$$\nu_{t_0} = \nu_{n_0 \gamma} = \nu_0.$$
In all what follows, we will consider such a choice of $(\nu_s)_{s \geq 0}$.
The aim is to obtain the convergence of $\frac{1}{t} \int_0^t \P_\mu(X_s \in \cdot | \tau_A > t)ds$ towards a quasi-ergodic distribution which will be unique. Let us state the following result, which is the more precise version of Theorem \ref{qed-presentation} introduced in Section \ref{general-results} :

\begin{theorem}
\label{qed}
Assume $A$ is $\gamma$-periodic with $\gamma > 0$, and assume that Assumption (A') is satisfied.
Then for any $\mu \in \cM_1(E_0)$,
$$\frac{1}{t} \int_0^t \P_\mu(X_s \in \cdot | \tau_{A} > t)ds  \underset{t \to \infty}{\overset{(d)}{\longrightarrow}}  \frac{1}{\gamma} \int_0^\gamma \Q_{0,\beta_\gamma}(X_s \in \cdot) ds, $$
where $\beta_\gamma$ is the invariant measure of $(X_{n \gamma})_{n \in \N}$ under $\Q_{0,\cdot}$, i.e.
$$\forall n \in \N,~~~~\beta_\gamma = \Q_{0,\beta_\gamma}(X_{n \gamma} \in \cdot) = \int_{E_0} \beta_\gamma(dx) \Q_{0,x}(X_{n \gamma} \in \cdot).$$
\end{theorem}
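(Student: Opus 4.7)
The plan is to apply part 2 of Theorem \ref{expo-conv-thm}, which reduces the statement to verifying (a) the uniformity condition \eqref{cs} and (b) a mean ergodic theorem for the $Q$-process whose limit is precisely $\frac{1}{\gamma}\int_0^\gamma \Q_{0,\beta_\gamma}(X_s \in \cdot) ds$.

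For (a), the choice $t_0 = n_0\gamma$ combined with the $\gamma$-periodicity of $A$ gives $A \circ \theta_{t_0} = A$ and $E_{t_0} = E_0$, so $\tau_{A \circ \theta_{t_0}}$ and $\tau_A$ have the same law under $\P_y$ for every $y \in E_0$; the ratio appearing in \eqref{cs} is therefore bounded above by $1$.

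For (b), I observe that the $Q$-process formula \eqref{Q-proc} is invariant under rescaling $\eta_t$ by any positive scalar (numerator and denominator are both linear in $\eta$). Proposition \ref{proposition-CV} together with the $\gamma$-periodicity of $A$ forces $\eta_{(n+1)\gamma}$ to be proportional to $\eta_0$ on $E_0 = E_{(n+1)\gamma}$ (since $\eta_s$ is characterized on each space up to a multiplicative constant by the limit $\lim \P_x(\tau_{A \circ \theta_s} > \cdot)/\P_y(\tau_{A \circ \theta_s} > \cdot)$, which depends only on $A \circ \theta_s$). Hence
$$\Q_{n\gamma, y}(X_{(n+1)\gamma} \in B) = \frac{\E_y(\1_{X_\gamma \in B} \1_{\tau_A > \gamma} \eta_0(X_\gamma))}{\E_y(\1_{\tau_A > \gamma} \eta_0(X_\gamma))}$$
is independent of $n$, so the embedded chain $(X_{n\gamma})_{n \in \N}$ under $(\Q_{0,x})_{x \in E_0}$ is time-homogeneous. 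The asymptotic mixing estimate of Theorem \ref{theorem-CV} combined with $d_s \geq c_1 c_2 > 0$ yields
$$||\Q_{0,x}(X_{n\gamma} \in \cdot) - \Q_{0,y}(X_{n\gamma} \in \cdot)||_{TV} \leq 2(1-c_1c_2)^{\lfloor n/n_0 \rfloor},$$
so this chain is uniformly ergodic in total variation: there is a unique invariant probability $\beta_\gamma$ with $\Q_{0,x}(X_{n\gamma} \in \cdot) \to \beta_\gamma$ exponentially fast, uniformly in $x \in E_0$.

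To pass to the continuous-time statement I decompose $\int_0^t \Q_{0,x}(X_s \in \cdot)\,ds$ into blocks of length $\gamma$ and use the Markov property of the $Q$-process together with the same periodicity identification $\Q_{k\gamma, y}(X_{k\gamma+u} \in \cdot) = \Q_{0,y}(X_u \in \cdot)$ to rewrite each block as $\int_0^\gamma \int \Q_{0,y}(X_u \in \cdot)\,\Q_{0,x}(X_{k\gamma} \in dy)\,du$. Substituting $\beta_\gamma$ for $\Q_{0,x}(X_{k\gamma} \in dy)$ introduces an error exponentially small in $k$ uniformly in $u \in [0,\gamma]$, so Cesaro averaging gives convergence to $\frac{1}{\gamma}\int_0^\gamma \Q_{0,\beta_\gamma}(X_u \in \cdot)\,du$, and Theorem \ref{expo-conv-thm}(2) then concludes. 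The main obstacle is the set-up of (b): one must be careful that the time-inhomogeneous kernel of the $Q$-process, obtained from the possibly non-periodic family $(\eta_s)_{s \in I}$, nevertheless produces a genuinely time-homogeneous $\gamma$-step kernel, which is what the scale-invariance of \eqref{Q-proc} ensures; once this identification is in hand the rest of the argument is routine averaging.
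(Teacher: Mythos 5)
Your proof is correct, but it takes a noticeably different route from the paper's in step (b). Both arguments begin identically: step (a) — using $t_0 = n_0\gamma$ and $\gamma$-periodicity to get $E_{t_0}=E_0$, $\tau_{A\circ\theta_{t_0}} \overset{d}{=} \tau_A$, and hence bound the ratio in \eqref{cs} by $1$ — matches the paper verbatim. The divergence is in how the mean ergodic theorem \eqref{beta} for the $Q$-process is established. The paper introduces an auxiliary time-homogeneous absorbed chain $(Y_n)_{n\in\Z_+}$ (with $Y_n = X_{n\gamma}$ before absorption, cemetery state $\partial$ afterwards), observes that $(X_{n\gamma})$ under $\Q_{0,\cdot}$ is the $Q$-process of $(Y_n)$, checks the (A1)–(A2) conditions of \cite{CV2014} for $(Y_n)$ to get exponential ergodicity of that $Q$-process and hence Harris recurrence of $(X_{n\gamma})$ under $\Q_{0,\cdot}$, and then invokes the Höpfner–Kutoyants continuous-time ergodic theorem \cite[Theorem 2.1]{HK2010} followed by dominated convergence. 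You avoid both the auxiliary chain and the external references: you identify the embedded $\gamma$-step kernel of the $Q$-process as time-homogeneous directly (via the scale-invariance of \eqref{Q-proc} and the observation that $\eta_{(n+1)\gamma}\propto\eta_0$ on $E_0=E_{(n+1)\gamma}$, which is the content of the paper's equation \eqref{periodicity}), deduce uniform geometric ergodicity of this chain from the mixing estimate of Theorem \ref{theorem-CV} combined with $d_s\geq c_1c_2>0$, and then obtain the continuous-time Cesàro limit by hand via a block decomposition of $\int_0^t\Q_{0,x}(X_s\in\cdot)\,ds$ into pieces of length $\gamma$. Your approach is more self-contained — it stays entirely within the machinery already developed in the paper (Theorem \ref{theorem-CV}) and needs neither \cite{CV2014} applied to the auxiliary chain nor the Harris-recurrence/\cite{HK2010} route — at the cost of spelling out the elementary Cauchy-in-TV and block-averaging arguments that those references package. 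The paper's route yields a stronger $\Q_{0,x}$-a.s. ergodic theorem as a by-product, while yours produces exactly the in-expectation statement \eqref{beta} needed for Theorem \ref{expo-conv-thm}(2) and nothing more, which is arguably better suited to the task.
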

\begin{proof}[Proof of Theorem \ref{qed}]
%We shall use the following lemma whose proof is postponed to the end of this proof :
%\begin{lemma}
%\label{lemma} For any $x \in E_0$, there exists $C_{0,x} < \infty$ such that for any $s \geq 0$,
%$$C_{0,s,x} \leq C_{0,x} $$
%where, for general $s,t \in I$ and $x \in E_s$, $C_{s,t,x}$ is the same as Theorem \ref{expo-conv-thm}.
%\end{lemma}
We want to show an ergodic theorem for the time-inhomogeneous Markov process $(X_t)_{t \geq 0}$ under $(\Q_{s,x})_{s \geq 0, x \in E_s}$. 
Since $(A_t)_{t \geq 0}$ is $\gamma$-periodic, for any $0 \leq s \leq t$, for any $x \in E_s$,
\begin{equation}
\label{periodicity2}
\Q_{s+k\gamma,x}(X_{t + k \gamma} \in \cdot) = \Q_{s,x}(X_{t} \in \cdot),~~\forall k \in \Z_+.
\end{equation}
Moreover, for any $n \in \Z_+$,
\begin{align*}
\Q_{0,x}(X_{n \gamma} \in \cdot) &= \lim_{t \to \infty} \P_x(X_{n \gamma} \in \cdot | \tau_A > t) \\
&=  \lim_{ m \in \Z_+, m \to \infty} \P_x(X_{n \gamma} \in \cdot | \tau_A > m \gamma) \\
&= \lim_{ m \in \Z_+, m \to \infty} \P_{x}(Y_{n} \in \cdot | \tau_\d > m),
\end{align*}
where $\tau_\d$ is defined by
$$
\tau_\d= \left\{
    \begin{array}{ll}
       \inf\{n \geq 1 : \exists t \in ((n-1)\gamma, n\gamma], X_t \in A_t\} &\text{ if  } Y_0 \in E_0 \\
        0  & \text{ if   } Y_0 \in A_0
    \end{array}
\right.
$$
and $(Y_n)_{n \in \Z_+}$ is the time-homogeneous Markov chain defined by
$$Y_n =\left\{
\begin{array}{ll} X_{n\gamma} \text{ for } n < \tau_\d \\
\d ~~~~\text{      otherwise}
\end{array}
\right.$$
where $\d$ plays the role of an absorbing state for $(Y_n)_{n \in \Z_+}$.
In other words, $\tau_\d$ is an absorbing time for $(Y_n)_{n \in \Z_+}$ and, under $(\Q_{0,x})_{x \in E_0}$, the chain $(X_{n \gamma})_{n \in \Z_+}$ is the $Q$-process of  $(Y_{n})_{n \in \Z_+}$. \\ By Assumption (A') and recalling that we chose $(\nu_s)_{s \geq 0}$ as $\gamma$-periodic, $(Y_n)_{n \in \Z_+}$ satisfies the following Champagnat-Villemonais type condition : 
\begin{enumerate}
\item
$$\forall x \in E_0,~~~~\P_x(Y_{n_0} \in \cdot | \tau_\d > n_0) \geq c_{1}\nu_0;$$
\item
$$\forall x \in E_0, \forall n \in \Z_+, ~~~~\P_{\nu_0}(\tau_\d > n) \geq c_{2} \P_x(\tau_\d > n).$$
\end{enumerate}
where we recall that $n_0 = \frac{t_0}{\gamma}$.
Hence, by Theorem 3.1 in \cite{CV2014}, there exists $\beta_\gamma \in {\cal M}_1(E_0)$, $C > 0$ and $\rho \in (0,1)$ such that for any $n \in \Z_+$,
$$||\Q_{0,x}(X_{n \gamma} \in \cdot) - \beta_\gamma ||_{TV} \leq C\rho^n,~~\forall x \in E_0.$$
This implies that, under $\Q_{0,\cdot}$, $(X_{n \gamma})_{n \in \N}$ is Harris recurrent. We can therefore apply Theorem 2.1 in \cite{HK2010} and deduce that, for any nonnegative function $f$,
$$\frac{1}{t} \int_0^t f(X_s)ds \underset{t \to \infty}{\longrightarrow} \E^\Q_{0,\beta_\gamma}\left(\frac{1}{\gamma} \int_0^\gamma f(X_s)ds\right),~~\Q_{0,x}\text{-almost surely,} ~~\forall x \in E_0,$$
where $\E_{0,\mu}^\Q(G) = \int G d\Q_{0,\mu}$ for any measurable nonnegative function $G$ and $\mu \in {\cal M}_1(E_0)$.
It extends to $f \in {\cal B}(E)$ using $f = f_+ - f_-$ with $f_+, f_-$ non negative functions. 
Thus, by bounded Lebesgue's convergence theorem, for any $x \in E_0$ and for any $f \in {\cal B}(E)$,
$$\frac{1}{t} \int_0^t \E^{\Q}_{0,x}(f(X_s))ds \underset{t \to \infty}{\longrightarrow}  \E^\Q_{0,\beta_\gamma}\left(\frac{1}{\gamma} \int_0^\gamma f(X_s)ds\right).$$  
Hence the condition \eqref{beta} is satisfied. We conclude the proof using the second part of Theorem \ref{expo-conv-thm}.

\end{proof}
\begin{remark}
In \cite{HK2010}, Höpfner and Kutoyants claimed their results for Markov processes with continuous paths. It is easy to see using their arguments that the statement in Theorem 2.1. can be generalized to any time-inhomogeneous Markov processes $(X_t)_{t \in I}$ such that the condition of periodicity \eqref{periodicity2} is satisfied and the chain $(X_{n \gamma})_{n \in \Z_+}$ is Harris recurrent. See also Proposition 5 of \cite{hopfner2016ergodicity1}.
\end{remark}
%\begin{proof}[Proof of the lemma \ref{lemma}]
 %Fix $x \in E_0$. It is enough to show that the family $(\phi_{0,s,x})_{s \geq 0}$ defined as in the proof of Theorem \ref{expo-conv-thm} lives on a compact set $K_x \subset {\cal M}_1(E)$. As a matter of fact, for any $n \in \Z_+$
%$$\phi_{0,n\gamma,x} = \P_x(Y_{n} \in \cdot | \tau_\d > n)$$
%where $(Y_n)_{n \in \Z_+}$ and $\tau_\d$ are defined in the proof of Theorem \ref{qed}, and we already saw that $(Y_n)_{n \in \Z_+}$ satisfies Champagnat-Villemonais condition.
%Hence \cite{CV2014} entails the existence of a unique quasi-stationary distribution and the sequence $(\P_x(Y_{n} \in \cdot | \tau_\d > n))_{n \geq 0}$ converges weakly. By Prokhorov's theorem, this implies that $(\phi_{0,t,x})_{t \geq 0}$ is tight, that is to say for any $\epsilon > 0$, there exists a compact set $K_\epsilon$ such that
%$$\phi_{0,t,x}(K_\epsilon) \geq 1-\epsilon, \forall t \in I$$
%Define 
%$${\cal K}_x = \{\mu \in {\cal M}_1(E) : \forall \epsilon > 0, \mu(K_\epsilon) \geq 1 - \epsilon\}$$
%Then, by Prokhorov's theorem again, ${\cal K}_x$ is relatively compact and the sequence $(\phi_{0,t,x})_{t \geq 0}$ lives on ${\cal K}_x$. Because of the continuity of $\pi \to C_{s,\pi}$ mentionned in Remark \ref{continuity}, for any $t \geq 0$,
%$$C_{0,\phi_{0,t,x}} \leq C_{0,x} := \sup_{\pi \in \overline{{\cal K}_x}} C_{0,\pi} < \infty$$
%\end{proof}

\subsection{Quasi-ergodic distribution when $A$ converges at infinity}
\label{subsection-conv}
In this subsection, we assume that  $A$ is non-increasing and let $A_\infty$ as defined in \eqref{A_inf}. 
In what follows, we will first state the existence and uniqueness of a quasi-limiting distribution under these assumptions. Then we will deal with quasi-ergodic distribution.
\subsubsection{Quasi-limiting distribution}
First we state the following proposition which will be useful to prove the theorem on the existence and the uniqueness of the quasi-limiting distribution.
%\begin{theorem}
%\label{qld}
%Let $(X_t)_{t \geq 0}$ be a Markov process and $(A_t)_{t \geq 0}$ the absorbing subsets such that
%$$A_t \underset{t \to \infty}{\longrightarrow} A_\infty$$
%in the sense of Definition \ref{convergence}.
%Moreover assume Assumptions \ref{assumption} and Assumption \ref{3}.
%Then there exists $\alpha \in {\cal M}_1(E_\infty)$ such that, for any $\mu \in {\cal M}_1(E_0)$, 
%$$\P_\mu(X_t \in \cdot | \tau_{A} > t) \underset{t \to \infty}{\overset{(d)}{\longrightarrow}} \alpha$$
%In particular, for any $x \in E_0$, there exists $C_{0,x} < \infty$ such that for any $s \geq 0$,
%$$C_{0,s,x} \leq C_{0,x} $$
%where $(C_{s,t,x})_{s \geq 0, t\geq 0, x\in E_s}$ is defined as in Theorem \ref{expo-conv-thm}
%\end{theorem}
\begin{proposition}
\label{prop-qld}
Under Assumptions (A'),
for any $B \in {\cal E}$, the quantities 
$$\limsup_{t \to \infty} \P_{s,\mu}(X_t \in B | \tau_{A } > t) \text{ and }  \liminf_{t \to \infty} \P_{s,\mu}(X_t \in B | \tau_{A } > t)$$
do not depend on any couple $(s,\mu)$ such that $\mu \in {\cal M}_1(E_s)$.
\end{proposition}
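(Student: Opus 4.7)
The plan is to prove the proposition via two reductions. The first uses the Markov property to pass from any couple $(s_1, \mu_1)$ to one with the later of two prescribed starting times; the second shows that $\limsup$ and $\liminf$ do not depend on the starting measure when the starting time is fixed.

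For the first reduction, assume without loss of generality that $s_1 \leq s_2$. Applying the Markov property at time $s_2 - s_1$ under $\P_{\mu_1}$ gives, for any $t \geq s_2 - s_1$,
$$\P_{\mu_1}(X_t \in B \mid \tau_{A \circ \theta_{s_1}} > t) = \P_\psi(X_{t-(s_2-s_1)} \in B \mid \tau_{A \circ \theta_{s_2}} > t-(s_2-s_1)),$$
where $\psi := \P_{\mu_1}(X_{s_2-s_1} \in \cdot \mid \tau_{A \circ \theta_{s_1}} > s_2-s_1) \in {\cal M}_1(E_{s_2})$. Since $\limsup$ and $\liminf$ as $t \to \infty$ are invariant under shifts of the time variable, it suffices to prove that $\limsup_t$ and $\liminf_t$ of $\P_\mu(X_t \in B \mid \tau_{A \circ \theta_s} > t)$ do not depend on $\mu \in {\cal M}_1(E_s)$ for any fixed $s \in I$.

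For the second reduction, fix $s$ and $\mu, \nu \in {\cal M}_1(E_s)$, and apply the Markov property at $t_0$:
$$\P_\mu(X_t \in B \mid \tau_{A \circ \theta_s} > t) = \P_{\phi_\mu}(X_{t-t_0} \in B \mid \tau_{A \circ \theta_{s+t_0}} > t-t_0),$$
with $\phi_\mu := \P_\mu(X_{t_0} \in \cdot \mid \tau_{A \circ \theta_s} > t_0)$. Integrating Assumption~\ref{assumption}(1) against $\mu$ yields $\phi_\mu \geq c_1 \nu_{s+t_0}$; decomposing $\phi_\mu = c_1 \nu_{s+t_0} + (1-c_1)\tilde\phi_\mu$ and using Assumption~\ref{assumption}(2) to bound the survival probability of $\tilde\phi_\mu$ by that of $\nu_{s+t_0}$, a direct computation of the ratio of numerator to denominator gives
$$\P_{\phi_\mu}(X_{t-t_0} \in B \mid \tau > t-t_0) = \alpha_\mu \, \P_{\nu_{s+t_0}}(X_{t-t_0} \in B \mid \tau > t-t_0) + (1-\alpha_\mu)\, \P_{\tilde\phi_\mu}(X_{t-t_0} \in B \mid \tau > t-t_0),$$
with $\alpha_\mu \geq \kappa := \tfrac{c_1 c_2}{c_1 c_2 + 1 - c_1} > 0$, and analogously for $\nu$. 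Subtracting the identities for $\mu$ and $\nu$ and extracting the common minimum weight $\kappa$ cancels the contribution of $\P_{\nu_{s+t_0}}(\cdot)$ and leaves $|\P_\mu(X_t \in B \mid \tau>t) - \P_\nu(X_t \in B \mid \tau>t)| \leq (1-\kappa)\,|R_\mu - R_\nu|$ for suitable residuals $R_\mu, R_\nu \in [0,1]$ built from the shifted conditional dynamics.

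Iterating this decomposition, with each step applying the Markov property at a further $t_0$ to regenerate the Doeblin lower bound in $\nu_{s+kt_0}$ (the residuals themselves do not dominate $c_1\nu$ a priori, so an extra Markov step at $t_0$ is required before each reapplication of the contraction, in the spirit of \cite{CV2014,CV2016}), one obtains a bound of order $(1-\kappa)^{\lfloor t/t_0 \rfloor}$, which tends to $0$ as $t \to \infty$. The main obstacle is making this iteration compound cleanly rather than yielding only a uniform $(1-\kappa)$ bound: at each stage, one must invoke the Markov property once more to reintroduce the Doeblin-type lower bound at the shifted time before applying the Doeblin contraction again. Once the vanishing of the difference is established, $\limsup_t$ and $\liminf_t$ agree across all $\mu, \nu \in {\cal M}_1(E_s)$, and combined with the first reduction, across all admissible couples $(s, \mu)$.
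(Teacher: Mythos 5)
Your proposal has the same overall structure as the paper's proof: (i) a reduction, via the Markov property, to a fixed starting time $s$, and (ii) a mixing-type argument showing that, for fixed $s$, the $\limsup$ and $\liminf$ do not depend on the initial measure $\mu \in {\cal M}_1(E_s)$. Step (i) is carried out in essentially the same way in both. The difference is in step (ii): the paper simply invokes the mixing estimate \eqref{mixing-property} (Theorem~2.1 of \cite{CV2016}), which is recalled just before the proof and gives a total variation bound of the form $2\prod_k(1-d_{t-k})\to 0$, so the conclusion is immediate. You instead try to re-derive a Doeblin-type contraction from Assumption~\ref{assumption}.

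The first step of your contraction argument is correct: writing $\phi_\mu \geq c_1\nu_{s+t_0}$, passing to the conditional distribution, and using Assumption~\ref{assumption}(2) to control the relative survival masses does yield a common weight $\kappa = c_1c_2/(c_1c_2+1-c_1)$ on $\P_{\nu_{s+t_0}}(X_{t-t_0}\in\cdot\mid\tau_{A\circ\theta_{s+t_0}}>t-t_0)$, hence a bound $(1-\kappa)$ on the difference. The gap is in the iteration, and it is more serious than you acknowledge. The residuals $Q_\mu, Q_\nu$ you obtain are convex combinations, with $t$-dependent weights, of terms of the form $\P_\pi(X_{t-t_0}\in\cdot\mid\tau_{A\circ\theta_{s+t_0}}>t-t_0)$; they are not themselves of this form for any single measure $\pi$, because conditioning does not commute with mixture. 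So "applying the Markov property once more to reintroduce the Doeblin lower bound" does not apply to $Q_\mu$ as it stands, and the contraction does not obviously compound to $(1-\kappa)^{\lfloor t/t_0\rfloor}$. Making this work requires a more careful bookkeeping — this is precisely what \cite{CV2016} does via the minimum measures $v_{s,x_1,x_2}$ and the coefficients $d_s$ defined in \eqref{vessie}–\eqref{dsiprime}, leading to the product bound \eqref{mixing-property}. Since that estimate is already quoted in the paper two displays above the proposition, the cleanest fix for your write-up is to cite it rather than re-derive it; as written, your iteration is not a complete proof.
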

\begin{proof}[Proof of Proposition \ref{prop-qld}]
%The second part of the theorem is a straightforward consequence of the first part using the same arguments as in the proof of Lemma \ref{lemma}. 
%For the first part of the theorem,
 We recall the statement of \cite[Theorem 2.1]{CV2016}, which is adapted to our case :
\begin{theorem}[Theorem 2.1., \cite{CV2016}]
For any $s \in I$, for any $\mu_1, \mu_2 \in {\cal M}_1(E_s)$, for any $t \geq s + t_0$,
\begin{equation}
\label{mixing-property}|| \P_{s,\mu_1}(X_{t} \in \cdot | \tau_{A } > t) - \P_{s,\mu_2}(X_{t} \in \cdot | \tau_{A } > t)||_{TV} \leq 2  (1 - c_1c_2)^{\left\lfloor \frac{t-s}{t_0} \right\rfloor}.
\end{equation}
\end{theorem}

Let $B \in {\cal E}$. First we remark that, for $s$ fixed, $\limsup_{t \to \infty} \P_{\mu}(X_t \in B | \tau_{A \circ \theta_s} > t)$ does not depend on $\mu \in {\cal M}_1(E_s)$. This is straightforward since, thanks to \eqref{mixing-property}, for any $s \geq 0$ and any $\mu_1,\mu_2 \in {\cal M}_1(E_s)$,
$$||\P_{s,\mu_1}(X_t \in \cdot | \tau_{A } > t) - \P_{s,\mu_2}(X_t \in \cdot | \tau_{A } > t)||_{TV} \underset{ t \to 0}{\longrightarrow} 0,$$
which implies that, for any $s \geq 0$ and $\mu_1,\mu_2 \in {\cal M}_1(E_s)$,
\begin{equation}
\label{limsup}
\limsup_{t \to \infty}\P_{s,\mu_1}(X_t \in B | \tau_{A } > t) = \limsup_{t \to \infty} \P_{s,\mu_2}(X_t \in B | \tau_{A } > t).
\end{equation}
Now for any $u \geq 0$, recalling the notation $\phi_{t,s}(\mu) = \P_{s,\mu}(X_{s+u} \in \cdot | \tau_{A} >s+ u)$ for any $s \leq t$ and $\mu \in {\cal M}_1(E_s)$, 
\begin{align}
\limsup_{t \to \infty} \P_{s,\mu}(X_{t} \in B | \tau_{A } > t) &= \limsup_{t \to \infty} \P_{s,\mu}(X_{t + u} \in B | \tau_{A} > t + u)  \notag \\
 &= \limsup_{t \to \infty} \P_{s+u,\phi_{u+s,s}(\mu)}(X_{t} \in B | \tau_{A } > t) \notag \\
\label{trez}
&= \limsup_{t \to \infty} \P_{s+u,\nu}(X_{t} \in B | \tau_{A } > t),\end{align}
where we used first the semi-flow property of $(\phi_{t,s})_{s \leq t}$, and then \eqref{limsup} with a given probability measure $\nu \in {\cal M}_1(E_{s+u})$.\\
Hence \eqref{trez} show that $\limsup_{t \to \infty} \P_{s,\mu}(X_t \in B | \tau_{A} > t)$ does not depend on any couple $(s, \mu)$ satisfying $s \in I$ and $\mu \in {\cal M}_1(E_s)$. 
A similar reasoning shows that $\liminf_{t \to \infty} \P_{s,\mu}(X_t \in B | \tau_{A} > t)$ does not depend on $s$ and $\mu$ either.

\end{proof} 
Before showing the existence of a quasi-limiting and a quasi-ergodic distribution, let us state the following proposition providing a uniform-in-time convergence of the time-inhomogeneous conditioned semi-group towards the time-homogeneous limit semi-group.
\begin{proposition}
\label{uc}
Assume $(H_{hom})$, $(H_\infty)$ and $(H'_\infty)$, and let $s_0$ and $x_0$ as defined in $(H'_\infty)$. Then,
\begin{equation}\label{uniform-convergence}\lim_{s \to \infty} \sup_{0 \leq t \leq T} ||\P_{s,x_0}(X_{t+s} \in \cdot | \tau_{A} >s + T) - \P_{x_0}(X_t \in \cdot | \tau_{A_\infty} > T)||_{TV} =0.\end{equation}
\end{proposition}
\begin{remark}   
Taking $T = t$, \eqref{uniform-convergence} implies that
\begin{equation*}\lim_{s \to \infty} \sup_{t \geq 0} ||\P_{s,x_0}(X_{t+s} \in \cdot | \tau_{A} >s+ t) - \P_{x_0}(X_t \in \cdot | \tau_{A_\infty} > t)||_{TV} =0.\end{equation*} 
This is actually a stronger version than the definition of asymptotic pseudotrajectories as introduced by Benaïm and Hirsch in \cite{BH1996}, for which the supremum is usually only taken on a compact set of time. In a practical way, it is difficult to use the weak version to show the convergence of the time-inhomogeneous semi-flow; considering instead a uniform convergence on $\R_+$ will be useful for our purpose. The interested reader can see \cite{benaim1999} for more details about asymptotic pseudotrajectories.  
\end{remark}
\begin{proof}[Proof of Proposition \ref{uc}]
For any $0 \leq t \leq T$, $s \geq s_0$ and $B \in {\cal E}$,
\begin{align*}
&|\P_{s,x_0}(X_{t+s} \in B | \tau_{A} > s+T) - \P_{x_0}(X_t \in B | \tau_{A_\infty} > T)| \\ &=|\P_{s,x_0}(X_{t+s} \in B | \tau_{A } >s+ T) - \P_{s,x_0}(X_{s+t} \in B | \tau_{A_\infty} > s+T)| \\ &= \left| \frac{\P_{s,x_0}(\tau_{A_\infty} >s+ T)}{\P_{s,x_0}(\tau_{A } >s+ T)} \frac{\P_{s,x_0}(X_{t+s} \in B, \tau_{A } >s+  T)}{\P_{s,x_0}(\tau_{A_\infty} >s+T)} - \frac{\P_{s,x_0}(X_{s+t} \in B, \tau_{A_\infty} >s+T)}{\P_{s,x_0}(\tau_{A_\infty} >s+T)} \right| \\
&\leq \left| \frac{\P_{s,x_0}(\tau_{A_\infty} >s+T)}{\P_{s,x_0}(\tau_{A } >s+T)} \frac{\P_{s,x_0}(X_{s+t} \in B, \tau_{A} >s+ T)}{\P_{s,x_0}(\tau_{A_\infty} >s+T)} - \frac{\P_{s,x_0}(X_{t+s} \in B, \tau_{A } >s+T)}{\P_{s,x_0}(\tau_{A_\infty} >s+T)} \right| \\
&~~~~~~+ \left| \frac{\P_{s,x_0}(X_{s+t} \in B, \tau_{A} >s+T)}{\P_{s,x_0}(\tau_{A_\infty} > s+T)} - \frac{\P_{s,x_0}(X_{s+t} \in B, \tau_{A_\infty} >s+T)}{\P_{s,x_0}(\tau_{A_\infty} >s+T)} \right| \\
&\leq \left| \frac{\P_{s,x_0}(\tau_{A_\infty} >s+ T)}{\P_{s,x_0}(\tau_{A} >s+ T)}  - 1 \right| \times \frac{\P_{s,x_0}(X_{s+t} \in B, \tau_{A} >s + T)}{\P_{s,x_0}(\tau_{A_\infty} >s+T)} \\
&~~~~~~+ \left| \frac{\P_{s,x_0}(X_{s+t} \in B, \tau_{A} >s+T)-\P_{s,x_0}(X_{s+t} \in B, \tau_{A_\infty} > s+T)}{\P_{s,x_0}(\tau_{A_\infty} > s+T)}\right| \\
&\leq \left| \frac{\P_{s,x_0}(\tau_{A_\infty} >s+T)}{\P_{s,x_0}(\tau_{A } >s+T)}  - 1 \right|+  \frac{\P_{s,x_0}(\tau_{A } \leq s+T < \tau_{A_\infty})}{\P_{s,x_0}(\tau_{A_\infty} > s+T)},
\end{align*}
where we used several times the fact that $A_\infty \subset A_t$ for any $t$ (in particular to say that $\P_{s,x_0}(\tau_{A }>s+u) \leq \P_{s,x_0}(\tau_{A_\infty} >s+ u)$ for any $u \geq 0$). Hence it is enough to prove that 
\begin{equation}
\label{dini}
\sup_{t \geq 0} \frac{\P_{s,x_0}(\tau_{A } \leq s+t < \tau_{A_\infty})}{\P_{x_0}(\tau_{A_\infty} > t)} \underset{s \to \infty}{\longrightarrow} 0.
\end{equation}
As a matter of fact, \eqref{dini} is equivalent to
$$
\sup_{t \geq 0} \left| \frac{\P_{s,x_0}( \tau_{A} >s+ t)}{\P_{x_0}(\tau_{A_\infty} > t)} - 1 \right|  \underset{s \to \infty}{\longrightarrow} 0.$$
and it is easy to check that, for general functions $(s,t) \to f(s,t)$, $(f(s,\cdot))_{s \geq 0}$ converges uniformly towards the constant function equal to $1$ if and only if $\left(\frac{1}{f(s,\cdot)}\right)_{s \geq 0}$ also converges uniformly towards $1$. \\ 
Fix $t \geq 0$. Since $A$ is non-increasing, for any $s < s'$,
$$\frac{\P_{s,x_0}(\tau_{A} \leq s+t < \tau_{A_\infty})}{\P_{x_0}(\tau_{A_\infty} > t)} \geq \frac{\P_{s',x_0}(\tau_{A} \leq s'+t < \tau_{A_\infty})}{\P_{x_0}(\tau_{A_\infty} > t)}.$$ 
Moreover, using \textit{the convergence in law for the hitting times} of Assumption $(H_{hom})$,  one has, for any $t \geq 0$,
$$\frac{\P_{s,x_0}(\tau_{A } \leq s+t < \tau_{A_\infty})}{\P_{x_0}(\tau_{A_\infty} > t)} \underset{s \to \infty}{\longrightarrow} 0.$$
Finally, by \textit{the strong Markov property} of Assumption $(H_{hom})$, for any $t \geq 0$,
\begin{align*}
\label{markov-property}
\P_{s,x_0}(\tau_{A } \leq s+t < \tau_{A_\infty}) &= \E_{s,x_0}(\1_{\tau_{A } \leq s+t} \phi(X_{\tau_{A }}, \tau_{A }-s,t)),
\end{align*}
where $\phi(\cdot,\cdot, \cdot)$ is defined as follows
$$\forall z \in E_\infty, \forall~0 \leq u \leq t,~~~~\phi(z,u,t) = \P_z(\tau_{A_\infty} > t-u).$$
In \cite{CV2017} it is shown (Theorem 2.1.) that, under $(H_\infty)$,  there exists a constant $a_1 > 0$ such that, for any $x \in E_\infty$ and $t \in I$,
\begin{equation}
\label{eta}
\left| \frac{\eta_\infty(x)}{e^{\lambda_\infty t}\P_x(\tau_{A_\infty} > t)} - 1 \right| \leq a_1 e^{-\gamma_\infty t},
\end{equation}
where $\lambda_\infty$, $\gamma_\infty$ and the function $\eta_\infty$ were defined respectively in \eqref{lambda}, \eqref{gamma} and \eqref{def-eta} in the section \ref{general-results}.
This implies therefore that there exists $C > 0$ such that, for any $x \in E_\infty$ and $t \in I$,
\begin{equation*}
\left| \eta_\infty(x) - e^{\lambda_\infty t} \P_x(\tau_{A_\infty} > t) \right| \leq a_1 e^{-\gamma_\infty t} e^{\lambda_\infty t} \P_x(\tau_{A_\infty} > t) \leq C e^{- \gamma_\infty t},
\end{equation*}
where we used that the function $t \mapsto e^{\lambda_\infty t} \P_x(\tau_{A_\infty} > t)$ is upper bounded uniformly in $x$.
Hence, for any $x \in E_\infty$ and $t \geq 0$,
\begin{equation}
\label{eta2}
\frac{ e^{\lambda_\infty t} \P_x(\tau_{A_\infty} > t)}{\eta_\infty(x)} \leq  1 + \frac{C e^{- \gamma_\infty t}}{\eta_\infty(x)}.
\end{equation} 
By \cite[Proposition 2.3]{CV2014}, $\1_{\tau_{A } \leq t+s} \frac{\phi(X_{\tau_{A }}, \tau_{A }-s,t)}{\P_{x_0}(\tau_{A_\infty} > t)}$ converges almost surely towards $ e^{\lambda_\infty (\tau_{A }-s)} \frac{\eta_\infty(X_{\tau_{A }})}{\eta_\infty(x_0)}$, and using \eqref{eta} and \eqref{eta2}, 
\begin{multline*}
\frac{\phi(X_{\tau_{A }}, \tau_{A }-s,t)}{\P_{x_0}(\tau_{A_\infty} > t)} \\ = e^{\lambda_\infty (\tau_{A }-s)} \frac{\eta_\infty(X_{\tau_{A }})}{\eta_\infty(x_0)} \left[ \frac{\eta_\infty(x_0)}{e^{\lambda_\infty t}\P_{x_0}(\tau_{A_\infty} > t)} \times  \1_{\tau_{A } \leq t+s} \frac{e^{\lambda_\infty (t - (\tau_A-s))}\phi(X_{\tau_{A }}, \tau_{A }-s,t)}{\eta_\infty(X_{\tau_A})} \right]. \\
\leq e^{\lambda_\infty (\tau_{A }-s)} \frac{\eta_\infty(X_{\tau_{A }})}{\eta_\infty(x_0)} (1 + a_1 e^{- \gamma_\infty t}) \left(1 + \frac{C}{\eta_\infty(x_0)} e^{-\gamma_\infty (t- (\tau_A-s))}\1_{\tau_A \leq t+s}\right) \\
\leq (1+a_1)\left(1 + \frac{C}{\eta_\infty(x_0)}\right)\frac{\eta_\infty(X_{\tau_A})}{\eta_\infty(x_0)} e^{\lambda_\infty (\tau_A-s)}.
\end{multline*}
 Then, under $(H_\infty')$, by the bounded Lebesgue's convergence theorem, for any $s \geq s_0$,
\begin{align*}
\lim_{t \to \infty} \frac{\P_{s,x_0}(\tau_{A } \leq s+t < \tau_{A_\infty})}{\P_{x_0}(\tau_{A_\infty} > t)} &= \lim_{t \to \infty} \E_{s,x_0}\left(\1_{\tau_{A } \leq s+t} \frac{\phi(X_{\tau_{A }}, \tau_{A }-s,t)}{\P_{x_0}(\tau_{A_\infty} > t)}\right) \\
&=\E_{s,x_0} \left(e^{\lambda_\infty (\tau_{A }-s)} \frac{\eta_\infty(X_{\tau_{A }})}{\eta_\infty(x_0)}\right).
\end{align*}
For any $s \geq 0$, we can therefore define $f_s : t \to \frac{\P_{s,x_0}(\tau_{A } \leq s+t < \tau_{A_\infty})}{\P_{x_0}(\tau_{A_\infty} > t)}$ on the Alexandroff extension $\R_+ \cup \{\infty\}$ setting
$$f_s(\infty) :=  \lim_{t \to \infty} \frac{\P_{s,x_0}(\tau_{A} \leq s+t < \tau_{A_\infty})}{\P_{x_0}(\tau_{A_\infty} > t)} = \E_{s,x_0} \left(e^{\lambda_\infty (\tau_{A }-s)} \frac{\eta_\infty(X_{\tau_{A }})}{\eta_\infty(x_0)}\right).$$  
Then, like any $t \in \R$, $\left(f_s(\infty)\right)_{s \geq 0}$ is non-increasing and, by the assumption $(H_\infty')$,
$$\lim_{s \to \infty} f_s(\infty) = \lim_{s \to \infty} \E_{s,x_0} \left(e^{\lambda_\infty (\tau_{A }-s)} \frac{\eta_\infty(X_{\tau_{A }})}{\eta_\infty(x_0)}\right) = 0.$$
We conclude to the uniform convergence \eqref{dini} using Dini's theorem for a non-increasing sequence of functions.
\end{proof}
Now one will prove Theorem \ref{first-thm} stated in Section \ref{general-results}, which is recalled below : 
\begin{theorem}
\label{qld}
Under Assumptions (A'), $(H_{hom})$, $(H_\infty)$ and $(H'_\infty)$,
for any $\mu \in {\cal M}_1(E_0)$, 
$$\P_\mu(X_t \in \cdot | \tau_{A} > t) \underset{t \to \infty}{\overset{(d)}{\longrightarrow}} \alpha_\infty,$$
where $\alpha_\infty$ is the quasi-stationary distribution defined in Assumption $(H_\infty)$. 
\end{theorem}
\begin{proof}[Proof of Theorem \ref{qld} (Theorem \ref{first-thm})]
Fix $B \in {\cal E}$ and note that, by Assumption $(H_\infty)$,  for any $\mu \in {\cal M}_1(E_\infty)$,
$$\limsup_{t \to \infty} \P_\mu(X_{t} \in B | \tau_{A_\infty} > t) = \liminf_{t \to \infty} \P_\mu(X_{t} \in B | \tau_{A_\infty} > t) = \alpha_\infty(B),$$
where we recall that $\alpha_\infty$ is the quasi-stationary distribution of $(X_t)_{t \in I}$ absorbed at $A_\infty$.
By Proposition \ref{prop-qld}, for a given $s \in I$, $\limsup_{t \to \infty} \P_{s,\mu}(X_{t} \in B | \tau_{A} > t)$ and $\liminf_{t \to \infty} \P_{s,\mu}(X_{t} \in B | \tau_{A } > t)$ do not depend on $\mu \in {\cal M}_1(E_s)$. Denote therefore by $F_{sup}$ and $F_{inf}$ the functions defined by, for any $s \geq s_0$ and any $\mu \in {\cal M}_1(E_s)$ ,
$$F_{sup}(s) := \limsup_{t \to \infty} \P_{{s,\mu}}(X_{s+t} \in B | \tau_{A } > s+t) = \limsup_{t \to \infty} \P_{s,x_0}(X_{s+t} \in B | \tau_{A } > s+t)$$
and
$$F_{inf}(s) :=  \liminf_{t \to \infty} \P_{{s,\mu}}(X_{s+t} \in B | \tau_{A } > s+t) =  \liminf_{t \to \infty} \P_{s,x_0}(X_{s+t} \in B | \tau_{A} > s+t)$$
where $x_0$ is defined as in $(H'_\infty)$. 
Then $F_{sup}$ and $F_{inf}$ do not depend on $s$ either (by Proposition \ref{prop-qld}), hence for any $s \geq 0$,
$$F_{sup}(s) = \lim_{u \to \infty} F_{sup}(u),$$
and
$$F_{inf}(s) = \lim_{u \to \infty} F_{inf}(u).$$
Moreover, by the uniform convergence \eqref{uniform-convergence} of Proposition \ref{uc},
\begin{align*}
 \lim_{u \to \infty} F_{sup}(u) &=  \lim_{u \to \infty} \limsup_{t \to \infty} \P_{u,x_0}(X_{u+t} \in B | \tau_{A } > u+t) \\
&= \limsup_{t \to \infty} \P_{x_0}(X_{t} \in B | \tau_{A_\infty} > t) \\
&= \alpha_\infty(B).
\end{align*}
Similarly,
$$ \lim_{u \to \infty} F_{inf}(u) = \alpha_\infty(B).$$ 
Hence, for any $s \geq 0$ and $\mu \in {\cal M}_1(E_s)$,
$$\limsup_{t \to \infty} \P_{s,\mu}(X_{t} \in B | \tau_{A } > t) = \liminf_{t \to \infty} \P_{s,\mu}(X_{t} \in B | \tau_{A } > t) = \alpha_\infty(B).$$ 
\end{proof}
\begin{remark}
It can be interesting to compare this result and this proof with the one of \cite[Theorem 3.11]{BCG2017} obtained by Bansaye and al. In particular, they used a different property of asymptotic homogeneity, which is uniform-in-state in their case.
\end{remark}  
\subsubsection{Quasi-ergodic distribution}
Now we can state the existence and uniqueness of the quasi-ergodic distribution :
\begin{theorem}
\label{qed-converge}
Under the assumptions of Theorem \ref{qld}, %Assume moreover there exists $s_0 \in I$ and  $\mu_0 \in {\cal M}_1(E_{s_0})$ such that 
%\begin{align}
%\lim_{s \to \infty} \sup_{t,T \geq 0} ||\P_{\mu_0}(X_t \in \cdot | \tau_{A \circ \theta_s} > t + T) - \P_{\mu_0}(X_t \in \cdot | %\tau_{A_\infty} > t+T)||_{TV} =0
%\label{uniform-cv-Q-proc}
%\end{align}
for any $\mu \in \cM_1(E_0)$,
$$\frac{1}{t} \int_0^t \P_x(X_s \in \cdot | \tau_{A} > t)ds  \underset{t \to \infty}{\overset{(d)}{\longrightarrow}}  \beta_\infty,  $$
where $\beta_\infty$ is the unique invariant measure of the $Q$-process of $(X_t)_{t \geq 0}$ absorbed by $A_\infty$.

\end{theorem}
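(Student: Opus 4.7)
The strategy is to invoke the second part of Theorem \ref{expo-conv-thm}. Since Assumption \ref{assumption} holds (being among the hypotheses of Theorem \ref{qld}), it remains to verify the uniform boundedness condition \eqref{cs} and the mean ergodic convergence \eqref{beta} for the $Q$-process $(\Q_{0,x})$ with limit $\beta_\infty$.

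For condition \eqref{cs}, the non-increasing nesting $A_\infty \subset A_s$ for all $s$ yields $\tau_A \leq \tau_{A_\infty}$, so $\P_x(\tau_A > t) \leq \P_x(\tau_{A_\infty} > t)$. Under Assumption \ref{3}, Proposition 2.3 of \cite{CV2014} (already used in the proof of Proposition \ref{uc}) provides $\lambda_\infty > 0$ and a positive bounded function $\eta_\infty$ such that $e^{\lambda_\infty t}\P_x(\tau_{A_\infty} > t)$ converges uniformly in $x$ to $\eta_\infty(x)$; in particular the numerator is of order $e^{-\lambda_\infty t}$. For the denominator, I would lower-bound $\sup_{y \in E_{t_0}} \P_y(\tau_{A \circ \theta_{t_0}} > t)$ by $\P_z(\tau_{A \circ \theta_{t_0}} > t)$ for a suitable reference point $z \in E_{t_0}$, and then use Proposition \ref{uc} to compare this quantity with $\P_z(\tau_{A_\infty} > t)$, which is again of order $e^{-\lambda_\infty t}$. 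The ratio in \eqref{cs} is then uniformly bounded.

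For condition \eqref{beta}, Assumption \ref{3} is exactly the Champagnat-Villemonais condition for the fixed boundary $A_\infty$, so by Theorem 3.1 of \cite{CV2014} the associated time-homogeneous $Q$-process $\Q^\infty_\cdot$ is exponentially ergodic with unique invariant measure $\beta_\infty$. The goal is to transfer this ergodicity to the moving $Q$-process, i.e.\ to show that $\Q_{0,x}(X_s \in \cdot) \to \beta_\infty$ weakly as $s \to \infty$. Using the time-inhomogeneous Markov property of $\Q_{0,x}$, for any $T < s$ one has $\Q_{0,x}(X_s \in \cdot) = \int \Q_{0,x}(X_T \in dy)\, \Q_{T,y}(X_s \in \cdot)$. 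The plan is to exploit the explicit representation \eqref{Q-proc} from Theorem \ref{theorem-CV} for $\Q_{T,y}(X_{T+u} \in \cdot)$ together with Proposition \ref{uc}, to argue that this law approaches $\Q^\infty_y(X_u \in \cdot)$ as $T \to \infty$; the latter in turn converges to $\beta_\infty$ as $u \to \infty$ by exponential ergodicity of $\Q^\infty$. A Cesaro average then yields \eqref{beta}, and combined with Step~1 the conclusion follows from Theorem \ref{expo-conv-thm}.

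The principal obstacle is this transfer step: proving that the (normalized) eigenfunctions $\eta_s$ of the moving problem converge to the eigenfunction $\eta_\infty$ of the limiting problem, in a sense strong enough to imply $\Q_{s,y}(X_{s+u} \in \cdot) \to \Q^\infty_y(X_u \in \cdot)$ for every fixed $u$. This convergence is expected from Proposition \ref{uc} combined with the invariance identity $\E_y(\1_{\tau_{A \circ \theta_s} > u} \eta_{s+u}(X_u)) = \eta_s(y)$, but the details -- in particular the uniformity in the starting point needed to chain this limit with the mixing estimate of Theorem \ref{theorem-CV} -- constitute the technical heart of the argument.
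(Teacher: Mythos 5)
Your overall strategy (invoke the second part of Theorem \ref{expo-conv-thm}, hence verify \eqref{cs} and \eqref{beta}) is correct, but both verifications deviate from the paper, and the second one has a gap you yourself acknowledge.

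For condition \eqref{cs}, you reach for the eigenvalue/eigenfunction machinery of \cite{CV2014} and Proposition \ref{uc}. This is unnecessarily heavy and, more importantly, does not close. To lower-bound $\sup_{y\in E_{t_0}}\P_y(\tau_{A\circ\theta_{t_0}}>t)$ by something of order $e^{-\lambda_\infty t}$ via Proposition \ref{uc}, you would need a \emph{uniform-in-$t$} lower bound on $\P_z(\tau_{A\circ\theta_{t_0}}>t)/\P_z(\tau_{A_\infty}>t)$, but Proposition \ref{uc} controls this ratio only in the limit $s\to\infty$, not for the fixed shift $s=t_0$; moreover $\tau_{A\circ\theta_{t_0}}\le\tau_{A_\infty}$ makes the ratio $\le 1$, and its $t\to\infty$ limit (which the proof of Proposition \ref{uc} computes) can in principle vanish, so the argument is not robust. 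The paper's verification is a one-liner from monotonicity: since $A$ is non-increasing, $A_{t+t_0}\subset A_t$ gives $\tau_A\le\tau_{A\circ\theta_{t_0}}$, and $E_0\subset E_{t_0}$ gives $x\in E_{t_0}$, hence
$$\frac{\P_x(\tau_{A}>t)}{\sup_{y\in E_{t_0}}\P_y(\tau_{A\circ\theta_{t_0}}>t)}\le\frac{\P_x(\tau_{A}>t)}{\P_x(\tau_{A\circ\theta_{t_0}}>t)}\le 1.$$

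For condition \eqref{beta}, your plan is to disintegrate $\Q_{0,x}(X_s\in\cdot)$ at an intermediate time $T$ and show $\Q_{T,y}(X_{T+u}\in\cdot)\to\Q^\infty_y(X_u\in\cdot)$ as $T\to\infty$, for which you would need $\eta_{T+u}\to\eta_\infty$ in a strong enough sense. You correctly flag this eigenfunction convergence as the missing step, and indeed the paper never proves it. The paper instead sidesteps the issue entirely: the mixing estimate from Theorem \ref{theorem-CV},
$$\|\Q_{s,x}(X_t\in\cdot)-\Q_{s,y}(X_t\in\cdot)\|_{TV}\le 2\prod_{k=0}^{\lfloor(t-s)/t_0\rfloor-1}(1-d_{t-k}),$$
together with the same reasoning as Proposition \ref{prop-qld}, shows that $\limsup_{t\to\infty}\Q_{s,\mu}(X_t\in B)$ and $\liminf_{t\to\infty}\Q_{s,\mu}(X_t\in B)$ depend on neither $s$ nor $\mu$. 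Since they are constant in $s$, they equal their own limit as $s\to\infty$, and the \emph{uniform} (in $t,T$) convergence of Proposition \ref{uc} then legitimizes passing the limit in $s$ inside the iterated limits in $t$ and $T$, identifying both $\limsup$ and $\liminf$ with $\lim_{t\to\infty}\Q^\infty_x(X_t\in B)=\beta_\infty(B)$. Ces\`aro then gives \eqref{beta}. In short, the paper trades your eigenfunction-convergence step for an $s$-invariance-plus-uniform-convergence argument that requires no new estimate; you should replace the ``transfer step'' in your proposal with that reasoning, since as written it leaves the central claim unproved.
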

\begin{proof}[Proof of Theorem \ref{qed-converge}]

We will show that the $Q$-process converges weakly towards a probability measure. Fix $B \in {\cal E}$. Since we have the following inequality shown in Theorem 3.3 of \cite{CV2016}
$$||\Q_{s,\mu_1}(X_t \in \cdot) - \Q_{s,\mu_2}(X_t \in \cdot)||_{TV} \leq 2  (1-c_1c_2)^{\left\lfloor \frac{t-s}{t_0} \right\rfloor},$$ for any $\mu_1, \mu_2 \in \cM_1(E_s)$.   
We get therefore that
$$\limsup_{t \to \infty} \Q_{s,\mu_1}(X_t \in B)  = \limsup_{t \to \infty} \Q_{s,\mu_2}(X_t \in B),$$
and we can therefore use the reasoning of the proof of Proposition \ref{prop-qld} to show that, for any $s,u \in I$, for any $\mu, \nu \in {\cal M}_1(E_s) \times {\cal M}_1(E_{s+u})$,
$$\limsup_{t \to \infty} \Q_{s,\mu}(X_t \in B) = \limsup_{t \to \infty} \Q_{s+u,\nu}(X_t \in B).$$
In particular, for any $s \geq 0$, $\mu \in {\cal M}_1(E_{s})$,
$$\limsup_{t \to \infty} \Q_{s,\mu}(X_t \in B) = \lim_{u \to \infty} \limsup_{t \to \infty} \Q_{u,x_0}(X_t \in B).$$
By the uniform convergence \eqref{uniform-convergence} of Proposition \ref{uc}, for any $s \geq 0$, $\mu \in {\cal M}_1(E_{s})$,
\begin{align*}
\limsup_{t \to \infty} \Q_{s,\mu}(X_t \in B)  &= \lim_{u \to \infty} \limsup_{t \to \infty} \Q_{u,x_0}(X_{u+t} \in B) \\
&=  \lim_{u \to \infty}  \limsup_{t \to \infty} \lim_{T \to \infty} \P_{u,x_0}(X_{u+t} \in B | \tau_{A} > u+T) \\
&= \limsup_{t \to \infty} \lim_{T \to \infty} \P_{x_0}(X_t \in B | \tau_{A_\infty} > T) \\
&= \limsup_{t \to \infty} \Q^\infty_{x_0}(X_t \in B), \end{align*}
where, for any $x \in E_\infty$,
$$\Q^\infty_x(X_t \in B) = \lim_{T \to \infty} \P_x(X_t \in B | \tau_{A_\infty} > T)$$
is well-defined by \cite[Theorem 3.1]{CV2014} under Assumption $(H_\infty)$. This theorem states moreover that $(X_t)_{t \in I}$ admits under $(\Q^\infty_x)_{x \in E_\infty}$ a unique invariant measure $\beta_\infty$ and for any $x \in E_\infty,$ 
$$\lim_{t \to \infty} \Q^\infty_{x}(X_t \in \cdot) = \beta_\infty.$$
Thus, for any $B \in {\cal E}$, $s \geq 0$ and $x \in E_s$,
 \begin{align*}
\limsup_{t \to \infty} \Q_{s,x}(X_t \in B)  &= \beta_\infty(B) \\
&= \liminf_{t \to \infty} \Q_{s,x}(X_t \in B).
\end{align*}
Finally, thanks to the convergence in law of the $Q$-process we just prove, we can deduce the weak ergodic theorem using Cesaro's rule
$$\lim_{t \to \infty} \frac{1}{t} \int_0^t \Q_{0,x}(X_s \in \cdot)ds = \beta_\infty.$$
Hence the condition \eqref{beta} holds. As a result we can apply the second part of Theorem \ref{expo-conv-thm} and conclude the proof.
\end{proof}

\section{Example : Diffusion on $\R$}
Let $(X_t)_{t \geq 0}$ be a diffusion on $\R$ satisfying the following stochastic differential equation
\begin{equation}
\label{sde}
       dX_t = dW_t - V(X_t)dt,
\end{equation}
where $(W_t)_{t \in \R_+}$ is Brownian motion on $\R$ and $V \in {\cal C}^1(\R)$. We assume that, under $\P_x$, there exists a strongly unique non explosive solution of \eqref{sde} such that $X_0 = x$ almost surely.\\ 
Let $h$ be a positive bounded ${\cal C}^1$-function. We define $\tau_h$ the random time defined by 
$$\tau_h = \inf\{t \geq 0 : X_t \leq h(t)\}.$$
Let us also recall the definition of the semi-flow $(\phi_{t,s})_{s \leq t}$ when the absorbing boundary is $h$ :
$$\phi_{t,s} : \mu \mapsto \P_{s,\mu}(X_t \in \cdot | \tau_h > t),~~~~\forall s \leq t.$$

\subsection{Preliminaries on one-dimensional diffusion processes coming down from infinity}
\label{pre}
We assume that $(X_t)_{t \in \R_+}$ comes down from infinity (in the sense given in \cite{CCLMMSM2009}), that is, there exists $y > h_{max} := \sup_{s \geq 0} h(s)$ and $t > 0$ such that
\begin{equation}
\label{comes-down}
\lim_{x \to \infty} \P_x(\tau_y < t) > 0,
\end{equation}
where, for any $z \in \R$,
$$\tau_z := \inf\{t \geq 0 : X_t = z\}.$$
%Moreover, for any $x > h_{max}$, we assume that there exist $t_1,A > 0$ such that
%\begin{equation}
%\label{exit-time}
%\P_x(\tau_{h_{max}} > t_1) \leq A(\Lambda(x) - \Lambda(h_{max}))
%\end{equation}
%where for any $x \in (0,\infty)$
%$$\Lambda(x) = \int_0^x e^{2 \int_0^y \alpha(\xi)d\xi} dy$$
%is the scale function of $(X_t)_{t \geq 0}$. As a matter of fact, as remarked in the subsection 4.5.2. of \cite{CV2017b}, $(X_t)_{t \geq 0}$ satisfies
 In this case, as remarked in the subsection 4.5.2. of \cite{CV2017b}, $(X_t)_{t \geq 0}$ satisfies then 
$$\int_0^1 \frac{1}{x} \left(\sup_{y \in (z,\Lambda_z^{-1}(x)]} \frac{1}{\Lambda_z(y)} \int_z^y \Lambda_z(\xi)^2m(d\xi)\right)dx < \infty,$$
where, for any $z \geq 0$, $\Lambda_z$ is the scale function of $X$ satisfying $\Lambda_z(z) = 0$ and defined by
\begin{equation}
\label{scale-function}
\Lambda_z(x) = \int_z^x e^{2 \int_0^y V(\xi)d\xi} dy,~~~~\forall x \geq z
\end{equation}
 and $m$ is the speed measure of $(X_t)_{t \geq 0}$ defined by
$$m(d\xi) = 2 e^{-2\int_0^\xi V(\xi')d\xi'}d\xi.$$
In particular, for any $z \geq 0$, the process $Y^z := (\Lambda_z(X_t))_{t \geq 0}$ is a local martingale and, since $X$ is solution of \eqref{sde}, by Itô's formula, for any $t \geq 0$,
$$Y^z_t = Y^z_0 + \int_0^t \Lambda_z'(\Lambda^{-1}_z(Y^z_s))dW_s.$$
Note that $\Lambda_z' = \Lambda_0' =   e^{2 \int_0^\cdot V(\xi)d\xi}$ for any $z$. So denoting for any $x,z \geq 0$ $\sigma_z(x) := \Lambda_z'(\Lambda_z^{-1}) = \Lambda_0'(\Lambda_z^{-1})$, one has 
$$dY_t^z = \sigma_z(Y^z_t)dW_t.$$ 
 
Adapting \cite[Theorem 4.6]{CV2017b} for general diffusion processes, we deduce that for any $t > 0$, there exists $A^z_t < \infty$ such that

$$\P_x(t < \tau_z) \leq A^z_t \Lambda_z(x),~~~~\forall x \geq z.$$

So let $u_1 \geq 0$ arbitrarily chosen. One has for any $z \geq 0$,
\begin{equation}
\label{tempsdesortie}\P_x(u_1 < \tau_z) \leq A^z_{u_1} \Lambda_z(x),~~~~\forall x \geq z\end{equation}
or, equivalently,
$$\P_{\Lambda_z^{-1}(x)}(u_1 < \tau_z) \leq A^z_{u_1} x,~~~~\forall x \geq 0.$$
Denoting for any $r \geq 0$ and for any process $(R_t)_{t \geq 0}$ $\tau_r(R) := \inf\{t \geq 0 : R_t = r\}$, one has for any $z \geq 0$ and $x \geq r$,
$$\P_{\Lambda_z^{-1}(x)}(u_1 < \tau_{\Lambda_z^{-1}(r)}) = \P\left( \tau_r(Y^{z}) > u_1\middle| Y^z_0 = x\right).$$
Since $z \to \Lambda_z^{-1}(x)$ is increasing for any $x > 0$, then, for any $x > 0$ and for any $z \geq z'$,
\begin{equation}
\label{encadrement}
\sigma_z(x) \geq \sigma_{z'}(x).
\end{equation}
Thus, using the same reasoning as in the proof of Lemma 4.2. in the paper \cite{CV2016}, it is possible to show that \eqref{encadrement} implies that, for any $z \geq z'$ and $x \geq r$
$$ \P\left( \tau_r(Y^{z'}) > u_1\middle| Y^{z'}_0 = x\right) \geq  \P\left( \tau_r(Y^{z}) > u_1\middle| \tilde{Y}^z_0 = x\right)$$ or, equivalently,
\begin{equation}
\label{ineg-utile}
\P_{\Lambda_z^{-1}(x)}(u_1 < \tau_{\Lambda_z^{-1}(r)}) \leq \P_{\Lambda_{z'}^{-1}(x)}(u_1 < \tau_{\Lambda_{z'}^{-1}(r)}).\end{equation}
Taking $z'=r=0$, for any $x \geq 0$,
$$\P_{\Lambda_z^{-1}(x)}(u_1 < \tau_z) \leq \P_{\Lambda_{0}^{-1}(x)}(u_1 < \tau_{0}) \leq A_{u_1}^0x,~~~~\forall x \geq 0.$$
In conclusion, one has, for any $z \geq 0$, 
\begin{equation}
\label{tempsdesortie2}\P_x(u_1 < \tau_z) \leq A^0_{u_1} \Lambda_z(x),~~~~\forall x \geq z.\end{equation}
One set $A := A_{u_1}^0$. 
Let us now state and prove the following lemma.
 %To see this equivalence,  see \cite{CV2017b} and use Theorem 4.1, Proposition 4.9. and the remark in subsection 4.5.2. 
\begin{lemma}
\label{essentiel}
There exists $u_0 \geq 0$, $\kappa > 0$ a family of probability measures $(\psi_z)_{z \in [0,h_{max}]}$ such that, for any $z \in [0,h_{max}]$,
\begin{equation}
\label{first-cond}
\P_x(X_{u} \in \cdot | \tau_{z} > u) \geq \kappa \psi_z,~~~~\forall x > z, \forall u > u_0.
\end{equation}
\end{lemma}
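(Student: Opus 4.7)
The plan is to prove \eqref{first-cond} by a strong-Markov decomposition at some deterministic intermediate time $T_1 < u_0$, combining two ingredients: (a) a lower bound on the conditional probability that $X$ lies in a fixed compact reference window $[y^*, y^*+1]$ above $h_{max}$ at time $T_1$, uniformly in the starting point $x > z$ and in $z \in [0, h_{max}]$; and (b) a uniform Doeblin-type minorization for the sub-Markov semigroup started from that window. The monotonicity $\sigma_z \geq \sigma_0$ from \eqref{encadrement} together with the sharp estimate \eqref{tempsdesortie2} will be used throughout to reduce all uniformity-in-$z$ statements to the reference case $z = 0$.

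For ingredient (a), I fix $y^* > h_{max}$ and look for $T_1, p_1 > 0$ such that, for every $z \in [0, h_{max}]$ and every $x > z$,
$$\P_x\!\left(X_{T_1} \in [y^*,y^*+1],\ \tau_z > T_1\right) \ \geq\ p_1\,\P_x(\tau_z > T_1).$$
When $x > y^*+1$, the coming-down-from-infinity assumption \eqref{comes-down}, strengthened by iterating the strong Markov property, gives $\sup_{x > y^*+1} \P_x(\tau_{y^*+1} > T_1') < 1$ for some $T_1'$; since $z \leq h_{max} < y^*$, the event $\{\tau_{y^*+1} \leq T_1'\}$ is contained in $\{\tau_z > T_1'\}$, which yields the bound on this range. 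For $x$ in a compact subset of $(z, y^*+1]$ bounded away from $z$, a standard support-theorem argument for the SDE \eqref{sde} produces a uniform lower bound on the probability of climbing into $[y^*, y^*+1]$ by time $T_1$ without hitting $z$. The delicate case is $x$ close to $z$: I would pass to the time-changed local martingale $Y^z = \Lambda_z(X)$ with $dY^z = \sigma_z(Y^z) dW$, absorbed at $0$, use the comparison $\sigma_z \geq \sigma_0$ via \eqref{ineg-utile} to dominate the denominator $\P_x(\tau_z > T_1)$ by the corresponding quantity for $Y^0$, and combine with \eqref{tempsdesortie2} to reduce the problem to the single reference process $Y^0$.

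For ingredient (b), I would exhibit $u_1, \kappa_1 > 0$ and a probability measure $\psi_z$ supported in a compact subset of $(z, \infty)$ such that, for all $y \in [y^*, y^*+1]$ and $z \in [0, h_{max}]$,
$$\P_y(X_{u_1} \in \cdot,\ \tau_z > u_1)\ \geq\ \kappa_1\, \psi_z.$$
Since \eqref{sde} has constant diffusion coefficient $1$ and a $\mathcal{C}^1$ drift, the sub-Markov semigroup has a smooth positive density $p^z_{u_1}(y, \cdot)$, bounded below on any compact sub-rectangle of $(z, \infty)^2$ by a constant that varies continuously with $z$; compactness of $[0, h_{max}]$ then gives the uniformity. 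Combining (a) and (b) with the strong Markov property at $T_1$ and setting $u_0 = T_1 + u_1$, one obtains for every $x > z$ and every $B \in \cE$
\begin{align*}
\P_x(X_{u_0} \in B,\ \tau_z > u_0)
&\geq \int_{[y^*, y^*+1]} \P_x(X_{T_1} \in dy,\ \tau_z > T_1)\,\P_y(X_{u_1} \in B,\ \tau_z > u_1) \\
&\geq p_1 \kappa_1\, \psi_z(B)\,\P_x(\tau_z > T_1)\ \geq\ p_1 \kappa_1\, \psi_z(B)\,\P_x(\tau_z > u_0),
\end{align*}
which is \eqref{first-cond} at $u = u_0$ with $\kappa = p_1 \kappa_1$. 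Extension to all $u > u_0$ then follows by conditioning on $X_{u_0}$ and using that the sub-Markov semigroup of $X$ absorbed at $z$ is a contraction.

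The main obstacle is ingredient (a) in the regime $x \downarrow z$: both sides of the required inequality vanish and their ratio must be controlled uniformly in $z \in [0, h_{max}]$. The scale-function reparameterisation and the monotonicity $\sigma_z \geq \sigma_0$ from \eqref{encadrement} should reduce the analysis to the single reference martingale $Y^0$ considered throughout Section \ref{pre}, for which \eqref{tempsdesortie2} provides the sharp two-sided control needed to conclude.
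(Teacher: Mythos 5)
Your two-ingredient architecture (escape the boundary, then minorize from a fixed region, combine by the strong Markov property, and finally propagate from $u=u_0$ to all $u>u_0$) mirrors the paper's three-step scheme. However, the version of ingredient (a) you aim for is strictly stronger than what the paper's Step 1 establishes, and the route you sketch to obtain it has two concrete flaws. First, the inclusion $\{\tau_{y^*+1}\le T_1'\}\subset\{\tau_z>T_1'\}$ invoked for $x>y^*+1$ is false: continuity only gives $\tau_{y^*+1}\le\tau_z$, and after hitting $y^*+1$ before $T_1'$ the path may well reach $z$ before $T_1'$. Moreover, reaching $y^*+1$ at a \emph{random} time does not place $X_{T_1}$ in the fixed window $[y^*,y^*+1]$ at the \emph{deterministic} time $T_1$; you would need an additional uniform lower bound on $\inf_{t\in[0,T_1]}\P_{y^*+1}\bigl(X_t\in[y^*,y^*+1],\,\tau_z>t\bigr)$, which you do not supply.

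Second, and more fundamentally, the "delicate case $x\downarrow z$" that you flag as the main obstacle does not in fact reduce to the reference scale $z=0$ as you describe. The paper's Step~1 proves $\P_x(\Lambda_z(X_{u_1})\ge\epsilon\mid\tau_z>u_1)\ge c$ uniformly in $z$ via the local-martingale property of $\Lambda_z(X)$, \eqref{tempsdesortie2}, and Markov's inequality; the key point is that \emph{after} applying $\Lambda_z$ the target event $\{\Lambda_z(X_{u_1})\ge\epsilon\}$ involves a single $z$-free level $\epsilon$, so the comparison $\sigma_z\ge\sigma_0$ of \eqref{encadrement} and the transfer inequality \eqref{ineg-utile} apply directly. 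Your target event $\{X_{T_1}\in[y^*,y^*+1]\}$, by contrast, becomes $\{\Lambda_z(X_{T_1})\in[\Lambda_z(y^*),\Lambda_z(y^*+1)]\}$ under the scale map, with $z$-dependent endpoints, so the monotone comparison no longer transfers the problem to $Y^0$. This is precisely why the paper does not try to land in a compact window in a first deterministic time: Step~1 only shows the conditional law charges the half-line above $\Lambda_z^{-1}(\epsilon)$, and Step~2 then produces the minorizing measure $\psi_z$ by a coupling/splitting argument (hit $z$ from $\Lambda_z^{-1}(\epsilon)$ within $u_{2,0}$, as in Theorem 4.1 of \cite{CV2017b}), not by Gaussian-type density bounds. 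To salvage your plan you would essentially have to interpose the paper's Step~1 between your (a) and (b), at which point you would be reproducing the paper's proof. One further minor slip: the propagation to $u>u_0$ should condition at time $u-u_0$ (so that the last stretch of length $u_0$ carries the minorization from an arbitrary initial law), not at time $u_0$; conditioning at $u_0$ together with a contraction estimate does not yield the bound at time $u$.
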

The difference between this lemma and  \cite[Theorem~4.1]{CV2017b} is that the time $u_0$ and the constant $\kappa$ do not depend on $z$. 
The sketch of the proof is inspired from the proof of the Theorem 4.1 presented in \cite[Subsection~5.1]{CV2017b}.  
\begin{proof}[Proof of Lemma \ref{essentiel}]
The following proof is divided into two steps.

\medskip\noindent\textit{Step 1. : Mimicking the Step 1 in the proof of \cite[Theorem~4.1]{CV2017b}} 

The aim of this first step is to prove that there exist $\epsilon, c > 0$ not depending on $z$ such that
\begin{equation}
\label{goal}
\P_x(\Lambda_z(X_{u_1}) \geq \epsilon | \tau_z > u_1) \geq c,~~~~\forall x > z.
\end{equation}
Since, for any $z \in [0,h_{max}]$, $\Lambda_z(X)$ is a local martingale, one has for any $x \in (z,\Lambda_z^{-1}(1))$,
\begin{align*}
\Lambda_z(x) &= \E_x(\Lambda_z(X_{u_1 \land \tau_z \land \tau_{\Lambda_z^{-1}(1)}}))\\ & = \P_x(\tau_z > u_1)\E_x(\Lambda_z(X_{u_1 \land\tau_{\Lambda_z^{-1}(1)}})|\tau_z > u_1) + \P_x(\tau_{\Lambda_z^{-1}(1)} < \tau_z \leq u_1).\end{align*}
By Markov property,
\begin{align*}
\P_x(\tau_{\Lambda_z^{-1}(1)} < \tau_z \leq u_1) &\leq \E_x(\1_{\tau_{\Lambda_z^{-1}(1)} < \tau_z \land u_1} \P_{\Lambda_z^{-1}(1)}(\tau_z \leq u_1)) \\
&\leq \P_x(\tau_{\Lambda_z^{-1}(1)} < \tau_z)  \P_{\Lambda_z^{-1}(1)}(\tau_z \leq u_1)\\
&= \Lambda_z(x)   \P_{\Lambda_z^{-1}(1)}(\tau_z \leq u_1),
\end{align*}
where the following identity is used 
$$\P_x(\tau_{a} < \tau_b) = \frac{\Lambda_z(x) - \Lambda_z(b)}{\Lambda_z(a) - \Lambda_z(b)},~~~~\forall x \in [a,b].$$ 
As a result, using \eqref{tempsdesortie2}, one has, for any $x \in (z,\Lambda_z^{-1}(1))$,
$$\E_x(1-\Lambda_z(X_{1 \land \tau_{\Lambda_z^{-1}(1)}}) | 1 < \tau_z) \leq 1 - \frac{1}{A_z'},$$
where $A'_z := A/\P_{\Lambda_z^{-1}(1)}(u_1 < \tau_z)$. But, since $z \in [0,h_{max}]$, the inequality \eqref{ineg-utile} applied to $r=0$ implies that 
$$\P_{\Lambda_z^{-1}(1)}(u_1 < \tau_z) \geq \P_{\Lambda_{h_{max}}^{-1}(1)}(u_1 < \tau_{h_{max}}).$$
So, defining $A' :=  A/ \P_{\Lambda_{h_{max}}^{-1}(1)}(u_1 < \tau_{h_{max}})$, one has 
  $$\E_x(1-\Lambda_z(X_{u_1 \land \tau_{\Lambda_z^{-1}(1)}}) | u_1 < \tau_z) \leq 1 - \frac{1}{A'},~~~~\forall x \in (z,\Lambda_1^{-1}(1)).$$
Thus, using Markov's inequality, 
$$\P_x\left(\Lambda_z(X_{u_1 \land \tau_{\Lambda_z^{-1}(1)}}) \leq \frac{1}{2A'-1} \middle| \tau_z > u_1\right) \leq 1 - \frac{1}{2A'}.$$
Then, since $A' > 1$ by \eqref{tempsdesortie2}, $1/(2A'-1) < 1$. Thus, for any $\epsilon \in (0,1/(2A'-1))$ and $ x \in (z,\Lambda_z^{-1}(1/(2A'-1)))$,
\begin{align*}
&\P_x(\Lambda_z(X_{u_1}) \geq \epsilon, \tau_z > u_1) \\&\geq  \P_x(\tau_{\Lambda_z^{-1}(1/(2A'-1))} < u_1 \land \tau_z, \tau_{\Lambda_z^{-1}(\epsilon)} \circ \theta_{\tau_{\Lambda_z^{-1}(1/(2A'-1))}} > u_1 + \tau_{\Lambda_z^{-1}(1/(2A'-1))}) \\ &= \P_x(\tau_{\Lambda_z^{-1}(1/(2A'-1))} < u_1 \land \tau_z) \P_{\Lambda_z^{-1}(1/(2A'-1))}(\tau_{\Lambda_z^{-1}(\epsilon)} > u_1) \\
&\geq \P_x\left(\Lambda_z(X_{u_1 \land \tau_{\Lambda_z^{-1}(1)} \land \tau_z}) \geq 1/(2A'-1)\right)  \P_{\Lambda_z^{-1}(1/(2A'-1))}(\tau_{\Lambda_z^{-1}(\epsilon)} > u_1) \\
&\geq \frac{\P_x(\tau_z > u_1)}{2A'}  \P_{\Lambda_z^{-1}(1/(2A'-1))}(\tau_{\Lambda_z^{-1}(\epsilon)} > u_1) \\
&\geq \frac{\P_x(\tau_z > u_1)}{2A'}  \P_{\Lambda_{h_{max}}^{-1}(1/(2A'-1))}(\tau_{\Lambda_{h_{max}}^{-1}(\epsilon)} > u_1), 
\end{align*}
where \eqref{ineg-utile} is used again.
So, if $\epsilon$ is chosen such that $\P_{\Lambda_{h_{max}}^{-1}(1/(2A'-1))}(\tau_{\Lambda_{h_{max}}^{-1}(\epsilon)} > u_1) > 0$ (it is possible since  $\P_{\Lambda_{h_{max}}^{-1}(1/(2A'-1))}(\tau_z > u_1) > 0$), then there exist $\epsilon \in (0,1/(2A'-1))$ and $c > 0$ (not depending on $z$) such that, for any $x \in (z,\Lambda_z^{-1}(1/(2A'-1)))$, 
    $$\P_x(\Lambda_z(X_{u_1}) \geq \epsilon | \tau_z > u_1) \geq c.$$
 For $x \geq \Lambda_{z}^{-1}(1/(2A'-1))$, 
\begin{align*}\P_x(\Lambda_z(X_{u_1}) > \epsilon | \tau_z > u_1) &\geq \P_x(\Lambda_z(X_{u_1}) > \epsilon, \tau_z > u_1) \\ &\geq \P_x(\tau_{\Lambda_z^{-1}(\epsilon)} > u_1) \\&\geq \P_{\Lambda_z^{-1}(1/(2A'-1))}(\tau_{\Lambda_z^{-1}(\epsilon)} > u_1) \\& \geq \P_{\Lambda_{h_{max}}^{-1}(1/(2A'-1))}(\tau_{\Lambda_{h_{max}}^{-1}(\epsilon)} > u_1) > 0.\end{align*}
Finally, there exist  $\epsilon \in (0,1/(2A'-1))$ and $c > 0$ (not depending on $z$) such that, for any $x \geq z$, 
    $$\P_x(\Lambda_z(X_{u_1}) \geq \epsilon | \tau_z > u_1) \geq c.$$
\medskip\noindent\textit{Step 2.  Mimicking the steps 2 and 3 in the proof of \cite[Theorem~4.1]{CV2017b}}.

 Now, taking the exact same reasoning as the one presented in the second step of the proof of Theorem 4.1 \cite[Subsection\,5.1]{CV2017b}, one can prove that, for any $z \in [0,h_{max}]$, for all $x \geq \epsilon$,
$$\P_{\Lambda_z^{-1}(\epsilon)}(\Lambda_z(X_{u_{2,z}}) \in \cdot, \tau_z > u_{2,z}) \geq c_{1,z} \psi_z,$$
where 
\begin{itemize}
\item $u_{2,z}$ can be any time satisfying 
$ c'_{1,z} := \inf_{y > z} \P_y(\tau_z < u_{2,z}) > 0,$
\item $c_{1,z} := c_{1,z}' \P_{\Lambda_z^{-1}(\epsilon)}(\tau_z > u_{2,z}),$
\item and $\tilde{\nu}_z := \P_{\Lambda_z^{-1}(\epsilon)}(\Lambda_z(X_{u_{2,z}}) \in \cdot | \tau_z > u_{2,z}).$
\end{itemize}
In particular, for $z=0$, one choose $u_{2,0}$ such that
\begin{equation}
\label{t20}
\inf_{y > 0} \P_y(\tau_0 < u_{2,0}) > 0.\end{equation}
Hence, for any $z \in [0,h_{max}]$ and $x > z$,
$$\P_x(\tau_z < u_{2,0}) \geq \P_x(\tau_0 < u_{2,0}) \geq \inf_{y > 0} \P_y(\tau_0 < u_{2,0}) = c'_{1,0}.$$
Hence, for any $z \in [0,h_{max}]$, 
$$c'_{1,z} = \inf_{x > z} \P_x(\tau_z < u_{2,0}) > c'_{1,0}.$$
In other words, we can set for any $z \in [0,h_{max}]$
$$u_{2,z} = u_{2,0}.$$
Hence, one can define for any $z \in [0,h_{max}]$,
$$c_{1,z} := c'_{1,z}  \P_{\Lambda_z^{-1}(\epsilon)}(\tau_z > u_{2,0}),~~~~\tilde{\nu}_z := \P_{\Lambda_z^{-1}(\epsilon)}(\Lambda_z(X_{u_{2,0}}) \in \cdot | \tau_z > u_{2,0}).$$
As a result, doing the same computation as those presented in Step 3 of the proof of Theorem 4.1 in \cite{CV2017b}, and defining $u_0 := u_1 + u_{2,0}$, for any $x > z$, 
$$\P_x(\Lambda_z(X_{u_0}) \in \cdot | \tau_z > u_0) \geq c_{1,z}c \tilde{\nu}_z \geq c c'_{1,0} \P_{\Lambda_{h_{max}}^{-1}(\epsilon)}(\tau_{h_{max}} > u_{2,0}) \tilde{\nu}_z.$$
In conclusion, to get (almost) \eqref{first-cond}, one has to set $\kappa := c c'_{1,0} \P_{\Lambda_{h_{max}}^{-1}(\epsilon)}(\tau_{h_{max}} > u_{2,0})$ and $\psi_z := \P_{\Lambda_z^{-1}(\epsilon)}(X_{u_{2,0}} \in \cdot | \tau_z > u_{2,0})$ and one has 
\begin{equation}
\label{xi}
\P_x(X_{u_0} \in \cdot | \tau_z > u_0) \geq \kappa \psi_z,~~~~\forall x > z.\end{equation}
To get \eqref{first-cond} exactly, just note that the Lemma \ref{petit} (seen in the proof of Theorem \ref{expo-conv-thm}) can be applied to the conditional probabily $\P_x(X_{u} \in \cdot | \tau_z > u)$, in such a way that the inequality \eqref{xi} holds for any $u$ greater than $u_0$. 
\end{proof}

\subsection{Periodic absorbing function}
Before showing that the Assumption (A') is satisfied when $h$ is periodic or converging, we will need to give some hypothesis on the function $V$ as defined in \eqref{sde}. In the both case we will deal with, the absorbing function $h$ will be Lipschitz, i.e. 
$$L := \sup_{s \leq t} \frac{|h(t)-h(s)|}{|t-s|} < \infty.$$
Now we state the assumption we need on the function $V$
\begin{assumption}[Hypothesis on $V$] 
\label{v}
\begin{itemize}
\item $V$ is such that the process $X$ satisfying \eqref{sde} comes down from infinity.
\item $V$ is positive and increasing on $[-Lu_0,\infty)$ (where $u_0$ is mentioned in Lemma \ref{essentiel}).
\item $\sup_{x \in \R} V'(x) - V^2(x) < \infty$.
\end{itemize}
\end{assumption}   
Note that the functions $V : x \to (x-c)^\alpha$ with $\alpha > 1$ and $c > 0$ are suitable functions. 

Now the following proposition is stated and proved :
\begin{proposition}
\label{periodic}
Let $(X_t)_{t \geq 0}$ be a diffusion process following \eqref{sde}, such that Assumption \ref{v} is satisfied. Assume moreover that $h$ is a periodic function, with period $\gamma > 0$.

Then Assumption (A') holds. In particular, there exists a probability measure $\beta_\gamma$ such that, for any $x > h(0)$,
$$\frac{1}{t} \int_0^t \P_x(X_s \in \cdot | \tau_A > t)ds \underset{t \to \infty}{\longrightarrow} \beta_\gamma.$$
\end{proposition}
\begin{proof}[Proof of Proposition \ref{periodic}]
We will show that the two points in Assumption (A') are satisfied.
\begin{enumerate} 
\item Denote by ${\cal T}_{max}$ the set defined by
$${\cal T}_{max} = \{t \geq 0 : h(t) = h_{max}\}.$$
where we recall that $h_{max} = \sup_{s \geq 0} h(s)$.
The main part of this proof is to show that  there exists $C_{max} > 0$ such that, for any $s \in {\cal T}_{max}$ and any $u \in [u_0,u_0+\gamma]$
\begin{equation}
\label{major-part}
\P_{s,x}(X_{s+u} \in \cdot | \tau_{h } > s+u) \geq C_{max} \psi_{h_{max}},~~~~\forall x > h_{max}.  
\end{equation}
where $u_0$ and $\psi_{h_{max}}$ are defined in Lemma \ref{essentiel}.
Then we will generalize \eqref{major-part} to any $s \geq 0$ using Markov property. \\\\
\textit{First step : Proof of \eqref{major-part}}\\
Let $s \in {\cal T}_{max}$.
%Let $t_1 \geq 0$ and $\epsilon > 0$ such that 
%$$\P_x(X_{t_1} \geq \epsilon + h_{max} | \tau_{h_{max}} > t_1) \geq c,~~~~\forall x > h_{max}$$
%where $c > 0$.  Then, 
For any $x > h_{max}$, for any $t \geq 0$, 
\begin{align*}
\P_{s,x}(X_{t+s} \in \cdot | \tau_{h } > s+t) &\geq \frac{\P_x(\tau_{h_{max}} > t)}{\P_{s,x}(\tau_{h } >s+ t)} \P_x(X_{t} \in \cdot | \tau_{h_{max}} > t).
%&\geq \frac{\P_x(\tau_{h_{max}} > t_1)}{\P_x(\tau_h > t_1)} c
\end{align*}
Using the Champagnat-Villemonais type condition \eqref{first-cond} for $z = h_{max}$, for any $u \geq u_0$,
$$\P_x(X_{u} \in \cdot | \tau_{h_{max}} >u) \geq \kappa \psi_{h_{max}},~~~~\forall x \in (h_{max},\infty)$$ 
Then we obtain for any $u_0 \leq u \leq u_0 + \gamma$,
\begin{align*}
\P_{s,x}(X_{s+u} \in \cdot | \tau_h >s+u) &\geq \frac{\P_x(\tau_{h_{max}} > u)}{\P_{s,x}(\tau_{h} >s+u)} \kappa \psi_{h_{max}} \\
&\geq \frac{\P_x(\tau_{h_{max}} > u_0+\gamma)}{\P_{s,x}(\tau_{h } >s+u_0)} \kappa \psi_{h_{max}}.
\end{align*}
Recalling that $h$ is Lispchitz and that we defined $L = \sup_{s \leq t} \frac{|h(t)-h(s)|}{|t-s|}$, for any $x \in (h_{max},\infty)$,
$$\frac{\P_x(\tau_{h_{max}} > u_{0}+\gamma)}{\P_{s,x}(\tau_{h } >s+u_{0})} \geq \frac{\P_x(\tau_{h_{max}} >u_{0}+\gamma)}{\P_x(\tau_{u \to h_{max} - Lu} > u_{0})},$$
where 
$$\tau_{u \to h_{max} - Lu} := \inf\{t \geq 0 : X_t = h_{max} - L t\}. $$
To show that 
$$\underset{x \in (h_{max},\infty)}{\inf} \frac{\P_x(\tau_{h_{max}} >u_{0}+\gamma)}{\P_x(\tau_{u \to h_{max} - Lu} > u_{0})} > 0$$
using a continuity argument, it is enough to show that
\begin{equation}
\label{0}
\underset{x \to h_{max}}{\liminf} \frac{\P_x(\tau_{h_{max}} >u_{0}+\gamma)}{\P_x(\tau_{u \to h_{max} - Lu} > u_{0})} > 0
\end{equation}
and
\begin{equation}
\label{inf}
\underset{x \to \infty}{\liminf} \frac{\P_x(\tau_{h_{max}} >u_{0}+\gamma)}{\P_x(\tau_{u \to h_{max} - Lu} >u_{0})} > 0.
\end{equation}
\eqref{inf} is obvious since 
$$\underset{x \to \infty}{\liminf} \frac{\P_x(\tau_{h_{max}} >u_{0}+\gamma)}{\P_x(\tau_{u \to h_{max} - Lu} >u_{0})} \geq \underset{x \to \infty}{\lim} \P_x(\tau_{h_{max}} >u_{0}+\gamma) > 0.$$
Thus let us focus on \eqref{0}. 
Our strategy will be to reduce the study to the case of a Brownian motion. Denote by $(M_t)_{t \geq 0}$ the exponential local martingale defined by, for any $t$,
\begin{align*}
 M_{t} &:= \exp\left(-\int_0^{t} V(W_s)dW_s - \frac{1}{2} \int_0^{t} V^2(W_s)ds\right) \\
&= \exp\left( F(W_0) - F(W_t) + \frac{1}{2} \int_0^t (V'(W_s)-V^2(W_s))ds\right),
\end{align*}
where $F$ is a primitive of $V$ that we choose as a positive function on $[-Lu_0, \infty)$ (it is possible since $F$ is necessarily non-decreasing by the assumptions on $V$).
Under $\P_x$ for $x \in (h_{max},h_{max}+1]$, $W_0 = x$ almost surely. Moreover denote by $\tau^W_{h_{max}}$ and $\tau^W_{u \to h_{max} - Lu}$ the following random times :
$$\tau^W_{h_{max}} := \inf\{t \geq 0 : W_t = h_{max}\},$$
$$\tau^W_{u \to h_{max} - Lu} := \inf\{t \geq 0 : W_t = h_{max} - Lt\}.$$
Thus, since $F$ is non-decreasing, the stopped local martingale $(M_{t \land u_0 \land \tau^W_{u \to h_{max} - Lu}})_{t \geq 0}$ is almost surely bounded by $\exp\left(F(h_{max} + 1) + \frac{u_0}{2} \sup_{y \in \R} V'(y)-V^2(y)\right)$ and is therefore a martingale. Likewise,  the stopped local martingale $(M_{t \land u_0 + \gamma \land \tau^W_{h_{max}}})_{t \geq 0}$ is also a martingale.
 By Girsanov's theorem,
\begin{align*} \frac{\P_x(\tau_{h_{max}} >u_{0}+\gamma)}{\P_x(\tau_{u \to h_{max} - Lu} > u_{0})} &=  \frac{\E_x\left(\1_{\tau^W_{h_{max}} >u_{0}+\gamma}M_{u_0 + \gamma \land \tau^W_{h_{max}}}\right)}{\E_x\left(\1_{\tau^W_{u \to h_{max} - Lu} >u_{0}}M_{u_0 \land \tau^W_{u \to h_{max} - Lu}}\right)} \\
&= \frac{\E_x\left(\1_{\tau^W_{h_{max}} >u_{0}+\gamma}M_{u_0 + \gamma }\right)}{\E_x\left(\1_{\tau^W_{u \to h_{max} - Lu} >u_{0}}M_{u_0}\right)}.\end{align*}
For any $x \in (h_{max},h_{max} + 1]$,
\begin{align*}
&\E_x(\1_{\tau^W_{h_{max}} >u_{0}+\gamma}M_{u_0 + \gamma }) \geq \E_x\left(\1_{\tau^W_{h_{max}} >u_{0}+\gamma}M_{u_0 + \gamma } \1_{\sup_{s \in [0,u_0+\gamma]} W_s \leq h_{max} + 2}\right). \end{align*}
On the event $\{\sup_{s \in [0,u_0+\gamma]} W_s \leq h_{max} + 2\}$, 
$$M_{u_0+\gamma} \geq   \exp\left( - F(h_{max}+2) + \frac{u_0+\gamma}{2}  \inf_{s \in [h_{max},h_{max}+2]} (V'(s)-V^2(s))\right) =: \tilde{M}_{u_0+\gamma}.$$
As a result, \begin{align*}&\E_x(\1_{\tau^W_{h_{max}} >u_{0}+\gamma}M_{u_0 + \gamma }) \geq \tilde{M}_{u_0 + \gamma } \E_x\left(\1_{\tau^W_{h_{max}} >u_{0}+\gamma} \1_{\sup_{s \in [0,u_0+\gamma]} W_s \leq h_{max} + 2}\right)\\
 &\geq \tilde{M}_{u_0 + \gamma } \P_x\left({\tau^W_{h_{max}} >u_{0}+\gamma}\right) \inf_{y \in (h_{max},h_{max}+1]}\P_y\left({\sup_{s \in [0,u_0+\gamma]} W_s \leq h_{max} + 2}\middle| \tau^W_{h_{max}} >u_{0}+\gamma\right).
\end{align*}
Noting that 
$$\lim_{y \to h_{max}} \P_y\left({\sup_{s \in [0,u_0+\gamma]} W_s \leq h_{max} + 2}\middle| \tau^W_{h_{max}} >u_{0}+\gamma\right) = \P\left(\sup_{s \in [0,u_0+\gamma]} W^+_s \leq 2\right) > 0,$$
where $(W^+_t)_{t \geq 0}$ is a Brownian meander (see \cite{DIM77}, Theorem 2.1.), we deduce finally that there exists $c > 0$ such that for any $x \in (h_{max},h_{max}+1]$
$$\E_x(\1_{\tau^W_{h_{max}} >u_{0}+\gamma}M_{u_0 + \gamma }) \geq c  \P_x\left({\tau^W_{h_{max}} >u_{0}+\gamma}\right).$$
On the other side, as we said before, $(M_{t \land u_0 \land \tau^W_{u \to h_{max} - Lu}})_{t \geq 0}$ is almost surely bounded by $\exp\left(F(h_{max} + 1) + \frac{u_0}{2} \sup_{y \in \R} V'(y)-V^2(y)\right)$. Hence there exists $d > 0$ such that, for any $x \in (h_{max},h_{max}+1]$,
$$\E_x\left(\1_{\tau^W_{u \to h_{max} - Lu} >u_{0}}M_{u_0}\right) \leq d \P_x(\tau^W_{u \to h_{max} - Lu} > u_{0}).$$
As a result, for any $(h_{max},h_{max}+1]$,
 $$\frac{\P_x(\tau_{h_{max}} >u_{0}+\gamma)}{\P_x(\tau_{u \to h_{max} - Lu} > u_{0})} \geq \frac{c}{d} \frac{\P_x(\tau^W_{h_{max}} >u_{0}+\gamma)}{\P_x(\tau^W_{u \to h_{max} - Lu} > u_{0})}.$$
For any $x > h_{max}$, denote by $ p^W_{h_{max}}(x,\cdot)$ and $p^W_{u \to h_{max}-Lu}(x,\cdot)$ the density functions of $\tau^W_{h_{max}}$ and $\tau^W_{u \to h_{max} - Lu}$ which are known to be equal to
$$ p^W_{h_{max}}(x,t) = \frac{x-h_{max}}{\sqrt{2\pi t^3}} \exp\left(-\frac{(x-h_{max})^2}{2t}\right)$$
and
$$p^W_{u \to h_{max}-Lu}(x,t) = \frac{x-h_{max}}{\sqrt{2\pi t^3}} \exp\left(-\frac{1}{2t}(x-h_{max} + Lt)^2\right).$$
Then, for any $x \in (h_{max},h_{max}+1]$,
$$\frac{\P_x(\tau^W_{h_{max}} >u_{0}+\gamma)}{\P_x(\tau^W_{u \to h_{max} - Lu} > u_{0})} = \frac{\int_{u_{0}+\gamma}^\infty p^W_{h_{max}}(x,t)dt}{ \int_{u_{0}}^\infty p^W_{u \to h_{max}-Lu}(x,t)dt}.$$
%By Taylor's expansion for $x \to h_{max}$,
%$$F(x) = (x-h_{max})F'(h_{max}) + o((x-h_{max}))$$
%$$G(x) = (x-h_{max})G'(h_{max}) + o((x-h_{max}))$$
%with 
%$$F'(h_{max}) = \int_{u_{0}+\gamma}^\infty \partial_xp_{h_{max}}(h_{max},t)g_{h_{max}}(h_{max},t)dt$$
%$$G'(h_{max}) =  \int_{u_{0}}^\infty \partial_x  p_{u \to h_{max}-Lu}(h_{max},t)g_{u\to h_{max}-Lu}(h_{max},t)dt$$
%because $p_{h_{max}}(h_{max},t) = 0$ and $p_{u \to h_{max}-Lu}(h_{max},t)=0$ for any $t$. Since $\partial_xp_{h_{max}}(h_{max},t) > 0$ and $\partial_x  p_{u \to h_{max}-Lu}(h_{max},t)>0$ for any $t \geq 0$, we have therefore
By l'Hôpital's rule,  
\begin{align*}
\lim_{x \to h_{max}}  \frac{\P_x(\tau^W_{h_{max}} >u_{0}+\gamma)}{\P_x(\tau^W_{u \to h_{max} - Lu} > u_{0})} &= \lim_{x \to h_{max}}  \frac{\int_{u_{0}+\gamma}^\infty \partial_x p^W_{h_{max}}(x,t)dt}{ \int_{u_{0}}^\infty \partial_x p^W_{u \to h_{max}-Lu}(x,t)dt} \\
&= \frac{\int_{u_{0}+\gamma}^\infty \partial_x p^W_{h_{max}}(h_{max},t)dt}{ \int_{u_{0}}^\infty \partial_x p^W_{u \to h_{max}-Lu}(h_{max},t)dt} > 0 .\end{align*}
As a result,
$$\liminf_{x \to h_{max}} \frac{\P_x(\tau_{h_{max}} >u_{0}+\gamma)}{\P_x(\tau_{u \to h_{max} - Lu} > u_{0})} \geq \frac{c}{d} \lim_{x \to h_{max}} \frac{\P_x(\tau^W_{h_{max}} >u_{0}+\gamma)}{\P_x(\tau^W_{u \to h_{max} - Lu} > u_{0})} > 0.$$
In conclusion, $\underset{x \in (h_{max},\infty)}{\inf} \frac{\P_x(\tau_{h_{max}} >u_{0}+\gamma)}{\P_x(\tau_{u \to h_{max} - Lu} > u_{0})} > 0$ and \eqref{major-part} holds with $C_{max} =  \kappa \times \underset{x \in (h_{max},\infty)}{\inf} \frac{\P_x(\tau_{h_{max}} >u_{0}+\gamma)}{\P_x(\tau_{u \to h_{max} - Lu} > u_{0})}$. \\\\
%\item Let $t_2, c_1 > 0$ such that for any $x \geq \epsilon + h_{max}$,
%$$\P_x(X_{t_2} \in \cdot) \geq c_1 \P_{\epsilon + h_{max}}(X_{t_2} \in \cdot | \tau_{h_{max}} > t_2)$$
%Then,
\textit{Second step : Generalization and conclusion}\\
Now let $s \geq 0$. Then there exists $s' \geq 0$ such that $s + s' \in {\cal T}_{max}$. As a result we can construct a function $g : \R_+ \to \R_+$ as follows
\begin{equation}
\label{g}g(s) = \inf\{s' \geq 0 : s + s' \in {\cal T}_{max}\}.\end{equation}
In particular, $g(s) = 0$ if $s \in {\cal T}_{max}$. Since $h$ is a continuous function, $s + g(s) \in {\cal T}_{max}$ for any $s \geq 0$. Moreover, since $h$ is $\gamma$-periodic, then for any $s \geq 0$, $g(s) \leq \gamma$. \\
Thus, by the semi-flow property of $(\phi_{t,s})_{s \leq t}$, one has for any $x \in E_s$,
\begin{align*}
\P_{s,x}(X_{s+ u_{0} + \gamma} \in \cdot | \tau_{h} >s+ u_{0} + \gamma) &= \phi_{s+u_0+\gamma, s}(\delta_x) \\
&=  \phi_{s+u_0+\gamma,s+g(s)} \circ \phi_{s+g(s),s} (\delta_x) \\
&= \P_{s+g(s),  \phi_{s+g(s),s} (\delta_x)}(X_{s+u_0+\gamma} \in \cdot | \tau_h > s+ u_0 + \gamma).
\end{align*}
 Now by \eqref{major-part}, for any $x > h(s)$,
\begin{align*}
\P_{s+g(s),\phi_{s+g(s),s}(\delta_x)}(X_{ s + u_{0} + \gamma} \in \cdot | \tau_{h} > s+u_{0} + \gamma)
&\geq C_{max} \psi_{h_{max}}.
\end{align*}
since $u_0 + \gamma - g(s) \in [u_0,u_0 + \gamma]$. Hence, for any $s \geq 0$ and $x > h(s)$,
$$\P_{s,x}(X_{s+ u_{0} + \gamma} \in \cdot | \tau_{h} >s+ u_{0} + \gamma) \geq  C_{max} \psi_{h_{max}}.$$
As a result the first condition in Assumption (A') holds denoting for any $s \geq 0$,
$$\nu_s = \psi_{h_{max}},$$
\begin{equation}
\label{depsee}
t_0 = \gamma + u_0,
\end{equation}
$$c_{1} = C_{max}.$$
\item For the second condition of Assumption (A'), we will use some part of the proof of \cite[Theorem 4.1]{CV2017b}. First we recall \cite[Lemma 5.1]{CV2017b} :
\begin{lemma}[Lemma 5.1., \cite{CV2017b}]
\label{lemma-denis}
There exists $a > h_{max}$ such that $\psi_{h_{max}}([a,\infty)) > 0$ and, for any $k \in \N$,
$$\P_a(X_{k u_{0} \land \tau_{h_{max}}} \geq a) \geq e^{-\rho k  u_{0}},$$
with $\rho > 0$.
\end{lemma}
So let $a$ as in the previous lemma. It is shown in \cite{CV2017b} that we can choose $b > a$ large enough such that
\begin{equation}
\label{expo-moment-2}
\sup_{x \geq b} \E_x(e^{\rho \tau_b}) < \infty.
\end{equation}
Using Markov property, for any $s \geq 0$, $t \geq 0$, and for any $s_0 = k_0\gamma$ with $k_0 \in \N$,
\begin{align*}
\P_{s,a}(\tau_h > s+t) &\geq \P_{s,a}(\tau_h > s+s_0 \land \tau_{h_{max}} + t) \\
&\geq \P_{s,a}(X_{s+s_0 \land \tau_{h_{max}}} \geq b, \tau_h > s+s_0 \land \tau_{h_{max}} + t) \\
& \geq \P_a(X_{s_0 \land \tau_{h_{max}}} \geq b) \P_{s+s_0,b}(s+s_0+t < \tau_{h }) \\
& \geq \P_a(X_{s_0 \land \tau_{h_{max}}} \geq b) \P_{s,b}(s+t < \tau_{h }).
\end{align*}  
Then, for $s_0 > 0$ fixed, $C := 1/\P_a(X_{s_0 \land \tau_{h_{max}}} \geq b) < \infty$, and for any $s \leq t$,
$$\P_{s,b}(t < \tau_{h }) \leq C \P_{s,a}(t < \tau_{h}).$$
Thanks to Markov property again, for any $u \leq t \in \R_+$
\begin{align}
\label{again}
\P_a(X_{u \land \tau_{h_{max}}}  \geq a)\P_{s+u,a}(s+t < \tau_{h }) &\leq \P_{s,a}(s+t < \tau_{h}).
\end{align}
According to Markov property, for any $u \in \R_+$,
\begin{align}\P_a(X_{u \land \tau_{h_{max}}}  \geq a) &\geq \P_a(X_{\lfloor \frac{u}{ u_{0}} \rfloor  u_{0} \land \tau_{h_{max}}}  \geq a)\P_a(X_{(u - \lfloor \frac{u}{u_0} \rfloor  u_{0}) \land \tau_{h_{max}}}  \geq a) \notag \\
&\geq C' \P_a(X_{\lfloor \frac{u}{ u_{0}} \rfloor  u_{0} \land \tau_{h_{max}}}  \geq a), 
\label{cprime}
\end{align} 
where
$$C' :=  \inf_{v \in [0, u_{0}]} \P_a(X_{v \land \tau_{h_{max}}}  \geq a) > 0$$
since $v \to \P_a(X_{v \land \tau_{h_{max}}}  \geq a)$ is continuous and $\P_a(X_{v \land \tau_{h_{max}}}  \geq a) > 0$ for any $v \in [0, u_{0}]$. 
Gathering all these inequalities and using also Lemma \ref{lemma-denis}, for any $x \geq b$,
\begin{align}
\label{begining}
\P_{s,x}(t+s < \tau_{h}) &\leq \P_x(\tau_b > t) + \int_0^t \P_{s+u,b}(t+s < \tau_{h}) \P_x(\tau_b \in du) \\
&\leq \sup_{x \geq b} \E_x(e^{\rho \tau_b}) e^{-\rho t} + C \int_0^t \P_{s+u,a}(t+s < \tau_{h }) \P_x(\tau_b \in du) \notag \\
&\leq \sup_{x \geq b} \E_x(e^{\rho \tau_b}) e^{-\rho \lfloor t/ u_{0} \rfloor  u_{0}} \notag \\
&~~~~~~~~ + \frac{C}{C'} \P_{s,a}(s+t < \tau_{h }) \int_0^t \frac{1}{\P_a(X_{\lfloor u/ u_{0} \rfloor  u_{0} \land \tau_{h_{max}}}  \geq a)} \P_x(\tau_b \in du) \notag \\
&\leq \sup_{x \geq b} \E_x(e^{\rho \tau_b}) e^{\rho u_0} e^{- \rho\left(\left\lfloor \frac{t}{u_0} \right\rfloor + 1\right)u_0} +\frac{C}{C'} \P_{s,a}(t+s < \tau_{h }) \int_0^t e^{\rho u} \P_x(\tau_b \in du) \notag \\
&\leq \sup_{x \geq b} \E_x(e^{\rho \tau_b}) e^{\rho u_0} \P_a(X_{\left(\left\lfloor t/u_0 \right\rfloor + 1\right)u_0 \land \tau_{h_{max}}} \geq a) \notag \\ &~~~~~~~~~~ +\frac{C}{C'} \P_{s,a}(t+s < \tau_{h}) \int_0^t e^{\rho u} \P_x(\tau_b \in du) \notag \\
& \leq \sup_{x \geq b} \E_x(e^{\rho \tau_b}) e^{\rho u_0} \P_{s,a}(\tau_{h } > s+t) +\frac{C}{C'} \P_{s,a}(t+s < \tau_{h}) \int_0^t e^{\rho u} \P_x(\tau_b \in du).
\label{end}
\end{align}
We deduce from \eqref{expo-moment-2} that, for any $t \geq 0$,
$$\sup_{x \geq b} \P_{s,x}(t+s < \tau_{h }) \leq C'' \P_{s,a}(t +s< \tau_{h }),$$
where
$$C'' = \left(e^{\rho u_0} + \frac{C}{C'}\right) \sup_{x \geq b} \E_x(e^{\rho \tau_b}) < \infty.$$
Since $\psi_{h_{max}}([a,\infty)) > 0$, we conclude the point 2. of Assumption (A') setting
$$c_{2} = \frac{1}{C''}.$$
\end{enumerate}
\end{proof}

\subsection{When $h$ is decreasing and converges at infinity}

Let us now state the main proposition of this subsection : 
\begin{proposition}
\label{c}
Let $(X_t)_{t \geq 0}$ be a diffusion process following \eqref{sde}, such that Assumption \ref{v} is satisfied. Assume moreover that $h$ is a decreasing $\cC^1$-function going to $0$ as $t$ goes to infinity.

Then Assumption (A') holds.
\end{proposition}

Since this is a diffusion process on $\R_+$, $(X_t)_{t \geq 0}$ satisfies the strong Markov property and the assumption of continuity presented in Assumption $(H_{hom})$. Moreover, since $t \to X_t$ is continuous almost surely and, for any $s \geq 0$, $\tau_{h(s)}$ is the hitting time of the closed set $[-1,h(s)]$, then $\tau_{h(s)} \underset{s \to \infty}{\longrightarrow} \tau_0$ almost surely, which entails the convergence in law of the hitting times of Assumption $(H_{hom})$. In other words, Assumption $(H_{hom})$ is satisfied for such a process.

Moreover, by \cite[Theorem 4.1.]{CV2017b}, $(H_\infty)$ is satisfied and there exists a unique quasi-stationary distribution $\alpha_\infty \in \cM_1((0,+\infty))$ and two constants $C_\infty, \gamma_\infty$ such that, for any $t \geq 0$ and initial measure $\mu$,
$$\| \P_\mu(X_t \in \cdot | \tau_0 > t) - \alpha_\infty \|_{TV} \leq C_\infty e^{- \gamma_\infty t},$$
as well as a function $\eta_\infty$ as defined in \eqref{def-eta} in the section \ref{general-results}. However, the assumption $(H'_\infty)$ is not satisfied for all decreasing $\cC^1$-function converging to $0$. Nevertheless, it will be satisfied if 
$$h : t \mapsto e^{-\lambda t}$$
with $\lambda > 0$. As a matter of fact, for such a function $h$, one has, by continuity of $(X_t)_{t \geq 0}$ and $h$, for any $s \geq 0$ and $x \in E_s$,
\begin{align*}
\E_{s,x}(e^{\lambda_\infty \tau_h}\eta_\infty(X_{\tau_h})) &= \E_{s,x}(e^{\lambda_\infty \tau_h}\eta_\infty(h(\tau_h))) \\
&= \E_{s,x}(e^{\lambda_\infty \tau_h}\eta_\infty(e^{-\lambda \tau_h})), 
\end{align*} and, using \cite[Proposition 4.2.]{CV2017b}, there exists $K > 0$ such that, for any $x \in (0, + \infty)$, 
$$\eta_\infty(x) \leq K x,$$
so that
$$\E_{s,x}(e^{\lambda_\infty \tau_h}\eta_\infty(X_{\tau_h}))
= \E_{s,x}(e^{\lambda_\infty \tau_h}\eta_\infty(e^{-\lambda \tau_h})) \leq K \E_{s,x}(e^{(\lambda_\infty - \lambda) \tau_h}).$$
Now it is well-known (see \cite[Proposition 3]{MV2012}) that, since $\lambda_\infty - \lambda < \lambda_\infty$, there exists $x_0 \in (0, + \infty)$ such that
$$\E_{x_0}(e^{(\lambda_\infty - \lambda) \tau_0}) < + \infty.$$ 
Hence, for any $s \geq 0$ such that $h(s) \leq x_0$,
\begin{align*}
\E_{s,x_0}(e^{\lambda_\infty  \tau_h}\eta_\infty(X_{\tau_h})) &\leq  K \E_{s,x_0}(e^{(\lambda_\infty - \lambda) \tau_h}) \\
&\leq  K \E_{x_0}(e^{(\lambda_\infty - \lambda) (s + \tau_0)}) < \infty.
\end{align*}
Moreover, 
$$\E_{s,x_0}(e^{\lambda_\infty(\tau_h - s)} \eta_\infty(X_{\tau_h})) \leq K e^{-\lambda s} \E_{x_0}(e^{(\lambda_\infty - \lambda) \tau_0}) \underset{s \to \infty}{\longrightarrow} 0,$$
so the condition $(H'_\infty)$ is satisfied. Then, for such a function $h$, Proposition \ref{c} entails that, for any $\mu \in \cM_1((h(0),+\infty))$,
$$\P_\mu(X_t \in \cdot | \tau_h > t) \underset{t \to \infty}{\overset{(d)}{\longrightarrow}} \alpha_\infty.$$

\begin{proof}[Proof of Proposition \ref{c}]
\begin{enumerate} 
\item Adapting exactly the same reasoning as Proposition \ref{periodic}, we can show that for any $s \geq 0$ and any $x > h(s)$,
$$\P_{s,x}(X_{s+u_{0}} \in \cdot | \tau_{h} >s+ u_{0}) \geq \tilde{d}_s \kappa_{0} \psi_{h(s)},$$
where we recall that $u_0, \kappa_{0}$ and $\psi_z$ are such that \eqref{first-cond} holds, and where $d_s$ is defined by
$$\tilde{d}_s =  \frac{\P_x(\tau_{h(s)} >u_{0})}{\P_x(\tau_{u \to h(s) - Lu} > u_{0})}.$$ 
We have therefore to show that
\begin{equation*}
\inf_{s \geq 0} \tilde{d}_s > 0.
\end{equation*}
For any $z \in [0,h(0)]$ define $(X_t^{(z)})_{t \geq 0}$ by the solution of 
$$dX^{(z)}_t = dW_t - V(X^{(z)}_t +z)dt.$$
In particular, $X^{(0)} \overset{(d)}{=} X$. Likewise, for any $y \in \R$ and $z \in [0,h(0)]$, we denote by $\tau_y^{(z)} := \inf\{t \geq 0 : X^{(z)}_t = y\}$ and $\tau_{u \to y - Lu}^{(z)} := \inf\{t \geq 0 : X^{(z)}_t = y - Lt\}$.
Since $V$ is positive and increasing on $[-Lu_0,\infty)$, then, using Theorem 1.1 in [\cite{IW1989}, Chapter VI, p.437], we can show that for any $x > 0$ and $z \in [0,h(0)]$,
$$\P_x(\tau^{(z)}_0 > u_0) \geq \P_x(\tau^{(h(0))}_0 > u_0)$$
and that
$$\P_x(\tau^{(z)}_{u \to -Lu} > u_0) \leq \P_x(\tau^{(0)}_{u \to -Lu} > u_0).$$
Then, for any $x > 0$ and $s \geq 0$,
\begin{align*}
\P_{x+h(s)}(\tau_{h(s)} > u_0) &= \P_{x}(\tau^{(h(s))}_{0} > u_0) \\
 &\geq  \P_x(\tau^{(h(0))}_0 > u_0) \\
&\geq \frac{ \P_x(\tau^{(h(0))}_0 > u_0)}{\P_x(\tau^{(0)}_{u \to -Lu} > u_0)}  \P_{x+h(s)}(\tau_{u \to h(s)-Lu} > u_0).
\end{align*}
To conclude, it is enough to see that $\inf_{x > 0} \frac{ \P_x(\tau^{(h(0))}_0 > u_0)}{\P_x(\tau^{(0)}_{u \to -Lu} > u_0)} >  0$ using the same techniques as the point $1$ of Proposition \ref{periodic}. \\
As a result the first hypothesis of Assumption (A') holds setting for any $s \geq 0$,
$$\nu_s = \left\{
    \begin{array}{ll}
        \psi_{h(0)} & \mbox{if } s \leq u_0 \\
        \psi_{h(s-u_0)} & \mbox{if } s > u_0
    \end{array}
\right.
$$
$$t_0 = u_0,$$
$$c_{1} = \kappa_0 \times \inf_{s \geq 0} \tilde{d}_s. $$     

%\item Let $t_2, c_1 > 0$ such that for any $x \geq \epsilon + h_{max}$,
%$$\P_x(X_{t_2} \in \cdot) \geq c_1 \P_{\epsilon + h_{max}}(X_{t_2} \in \cdot | \tau_{h_{max}} > t_2)$$
%Then,

\item
The reasoning is the same as the point 2. in the proof of the Proposition \ref{periodic} and the technical computations could be hidden if they are already explicitly written for the periodic case.

Noting that,
for any $z \in [0,h(0)]$ and any $y \geq h(0)$, $\psi_z([y,\infty)) > 0$, then, by Lemma \ref{lemma-denis}, there exists $a > h(0)$ such that, for any $z \in [0,h(0)]$, $\psi_{z}([a,\infty)) > 0$ and for any $k \in \N$
$$\P_{a}(X_{ku_{0} \land \tau_{h(0)}} \geq a) \geq e^{-\rho k u_{0}},$$
where $\rho > 0$. We deduce that for any $s \geq 0$
$$\P_{a}(X_{ku_{0} \land \tau_{h(s)}} \geq a) \geq e^{-\rho k u_{0}}.$$
As in the proof of Proposition \ref{periodic}, we can choose $b > a$ large enough such that
\begin{equation*}
\label{expo-moment-3}
\sup_{x \geq b} \E_x(e^{\rho \tau_{b}}) < \infty.
\end{equation*}
Since $h$ is non-increasing, for any $s,t \geq 0$ and $s_0 \geq 0$,
$$\P_{s+s_0,b}(\tau_{h} > s+s_0+t) \geq \P_{s,b}(\tau_{h } >s+ t).$$
Hence, according to Markov property,
\begin{align*}
\P_{s,a}(s+t < \tau_{h }) &\geq \P_{a}(X_{s_0 \land \tau_{h(s)}} \geq b) \P_{s,b}(s+t < \tau_{h}) \\
&\geq  \P_{a}(X_{s_0 \land \tau_{h(0)}} \geq b) \P_{s,b}(s+t < \tau_{h }). 
\end{align*}
for any $t \geq 0$ and any $s_0 \geq 0$. Hence, for $s_0$ fixed,  $C := \frac{1}{\P_a(X_{s_0 \land \tau_{h(0)}} \geq b)} < \infty$, and for any $s \leq t$,
$$\P_{s,b}(t < \tau_{h }) \leq C \P_{s,a}(t < \tau_{h }).$$
Likewise, one finds an analog of the inequality \eqref{again} 
\begin{align*}
\P_{a}(X_{u \land \tau_{h(s)}} \geq a)\P_{s+u,a}(t+s < \tau_{h }) &\leq \P_{s,a}(t < \tau_{h }),
\end{align*}
and using the same reasoning as for the inequality \eqref{cprime},
\begin{align*}\P_{a}(X_{u \land \tau_{h(s)}}  \geq a) 
&\geq C' \P_{a}(X_{\lfloor \frac{u}{u_{0}} \rfloor u_{0} \land \tau_{h(s)}}  \geq a), 
\end{align*} 
with
$$C' :=  \inf_{v \in [0,u_{0}]} \P_{a}(X_{v \land \tau_{h(0)}}  \geq a) > 0.$$
Hence, using these previous inequalities and doing again the array of computation \eqref{begining}-\eqref{end},
we deduce that, for any $s \leq t$,
$$\sup_{x \geq b} \P_{s,x}(t < \tau_{h }) \leq C'' \P_{s,a}(t < \tau_{h }),$$
where
$$C'' := \left(e^{\rho u_0} + \frac{C}{C'}\right) \sup_{x \geq b} \E_x(e^{\rho \tau_{b}}) < \infty.$$
Since $\psi_{h(s)}([a,\infty)) > 0$ for any $s \geq 0$, we conclude the proof of the point 2 setting
$$c_{2} = \frac{1}{C''}.$$

\end{enumerate}

\end{proof}

%, in particular $t \to \frac{\P_z(\tau_0 > t-u)}{\P_x(\tau_0 > t-u)}$ is bounded. We denote therefore, for any $u \geq 0$ and $z \in (0,\infty)$,
%$$D_{z,x,u} = \sup_{t \geq u} \frac{\P_z(\tau_0 > t-u)}{\P_x(\tau_0 > t)}$$ 
%Hence, by \eqref{markov-property}, for any $t \geq 0$,
 %\begin{align*}
%\P_x(\tau_{h \circ \theta_s} \leq t < \tau_0) &\leq \E_x(\1_{\tau_{h \circ \theta_s} \leq t} D_{X_{\tau_{h \circ \theta_s}},x, \tau_{h \circ \theta_s}}) \P_x(\tau_0 > t) \\
%&\leq \E_x(\1_{\tau_{h \circ \theta_s} < \infty} D_{X_{\tau_{h \circ \theta_s}},x, \tau_{h \circ \theta_s}}) \P_x(\tau_0 > t) \\
%&\leq \E_x(\1_{\tau_{0} < \infty} D_{X_{\tau_{h \circ \theta_s}},x, \tau_{0}}) \P_x(\tau_0 > t) 
%\end{align*}
%When $s$ goes to infinity
%$$\E_x(\1_{\tau_{h \circ \theta_s} < \infty} D_{X_{\tau_{h \circ \theta_s}},x, \tau_{h \circ \theta_s}}) \underset{s \to \infty}{\longrightarrow} 0$$
%Thus we have 
%$$\sup_{t \geq 0} \frac{\P_x(\tau_{h \circ \theta_s} \leq t < \tau_0)}{\P_x(\tau_{0} > t)} \underset{s \to \infty}{\longrightarrow} 0$$

%Moreover, for any $t \geq 0$,
%$$\frac{\P_x(\tau_{h \circ \theta_s} \leq t < \tau_0)}{\P_x(\tau_{0} > t)} \underset{s \to \infty}{\longrightarrow} 0$$
%Thus, by Dini's theorem, we get the uniform convergence : 
%$$\sup_{t \geq 0} \frac{\P_x(\tau_{h \circ \theta_s} \leq t < \tau_0)}{\P_x(\tau_{0} > t)} \underset{s \to \infty}{\longrightarrow} 0$$

\textbf{Acknowledgement} I am very grateful to my advisor Patrick Cattiaux for his relevant remarks and for the attention he gave to this paper, as well as the anonymous referee for its comments.

\bibliographystyle{abbrv}
\bibliography{biblio-william}

\end{document}